\newtheorem{theorem}{\bf Theorem}
\newtheorem{proposition}[theorem]{\bf Proposition}
\newtheorem{corollary}[theorem] {\bf Corollary}
\newtheorem{example}{\bf Example}
\newtheorem{remark}{\bf Remark}
\newtheorem{problem}[theorem]{Problem}
\DeclareMathOperator{\interior}{Int\,\!}
\newcommand{\Lh}{\widehat{L}}
\newcommand{\Dh}{\widehat{D}}
\newcommand{\la}{\lambda}
\begin{document}
\title[Bernstein and Markov-type inequalities for polynomials on $L_{p}(\mu)$ spaces.]
{Bernstein and Markov-type inequalities for polynomials on $L_{p}(\mu)$ spaces.}

\author[M. Chatzakou \and Y. Sarantopoulos]
{Marianna Chatzakou and Yannis Sarantopoulos}

\thanks{2010 Mathematics Subject Classification. Primary 41A17; Secondary 46G25, 47H60}


\address[Marianna Chatzakou]
{Mathematics Department, Imperial College London\\
180 Queen's Gate, London, SW7 2AZ, UK.}
\email{m.chatzakou16@imperial.ac.uk}

\address[Yannis Sarantopoulos]
{Department of Mathematics, National Technical University\\
Zografou Campus 157 80, Athens, Greece.}
\email{ysarant@math.ntua.gr }

\maketitle
\begin{center}
{\it In memory of Professor Victor Lomonosov}
\end{center}


\begin{abstract}
In this work we discuss generalizations of the classical Bernstein and Markov type inequalities for polynomials and we present some new inequalities for the $k$th Fr\'echet derivative of homogeneous polynomials on real and complex $L_{p}(\mu)$ spaces. We also give applications to homogeneous polynomials and symmetric multilinear mappings in $L_{p}(\mu)$ spaces. Finally, Bernstein's inequality for homogeneous polynomials on both real and complex Hilbert spaces has been discussed.
\end{abstract}



\section{\bf Introduction and notation.}

We recall the basic definitions needed to discuss polynomials from $X$ into $Y$, where $X$ and $Y$ are real or complex Banach spaces. We denote by $B_X$ and $S_X$ the closed unit ball and the unit sphere of $X$ respectively. A map $P: X\rightarrow Y$ is a (continuous) {\em $m$-homogeneous polynomial} if there is a (continuous) symmetric $m$-linear mapping $L: X^m \rightarrow Y$ for which $P(x)=L(x, \ldots, x)$ for all $x\in X$. In this case it is convenient to write $P=\widehat{L}$. We let $\mathcal{P}(^{m}X; Y)$, $\mathcal{L}({}^{m}X; Y)$ and $\mathcal{L}^{s}({}^{m}X; Y)$ denote respectively the spaces of continuous $m$-homogeneous polynomials from $X$ into $Y$, the continuous $m$-linear mappings from $X$ into $Y$ and the continuous symmetric $m$-linear mappings from $X$ into $Y$. If $\mathbb{K}$ is the real or complex field we use the notations $\mathcal{P}({}^{m}X)$, $\mathcal{L}({}^{m}X)$ and $\mathcal{L}^{s}({}^{m}X)$ in place of $\mathcal{P}({}^{m}X; \mathbb{K})$, $\mathcal{L}({}^{m}X; \mathbb{K})$ and $\mathcal{L}^{s}({}^{m}X; \mathbb{K})$ respectively. More generally, a map
$P: X\rightarrow Y$ is a {\em continuous polynomial of degree $\leq m$} if
\[
P=P_0 +P_1 +\cdots +P_m\,,
\]
where $P_k \in\mathcal{P}({}^{k}X; Y)$, $1\leq k\leq m$, and $P_0: X\rightarrow Y$ is a constant function. The space of continuous polynomials from $X$ to $Y$ of degree at most $m$ is denoted by $\mathcal{P}_m(X; Y)$. If $Y=\mathbb{K}$, then we use the notation $\mathcal{P}_{m}(X)$ instead of $\mathcal{P}_{m}(X; \mathbb{K})$. We define the norm of a continuous (homogeneous) polynomial $P: X\rightarrow Y$ by
\[
\|P\|_{B_X}=\sup\{\|P(x)\|_Y:\ x\in B_X\}\,.
\]
Similarly, if $L: X^{m} \rightarrow Y$ is a continuous $m$-linear mapping we define its norm by
\[
\|L\|_{B_{X}^{m}}=\sup\{\|L(x_1, \ldots, x_m)\|_Y: \ x_1, \ldots, x_m\in B_X\}\,.
\]
When convenient we shall denote $\|L\|_{B_{X}^{m}}$ by $\|L\|$ and $\|P\|_{B_{X}}$ by $\|P\|$. Note that $\mathcal{P}({}^m X; Y)$ and $\mathcal{L}({}^m X; Y)$ are Banach spaces.

The classical Bishop-Phelps theorem \cite{BP} asserts that the collection of norm attaining continuous linear functionals on a Banach space $X$ is norm dense in $X^\ast := \mathcal{L}(^{1}X)$, the space of all continuous linear functionals on $X$. However, in contrast to the linear case, the set of norm attaining continuous symmetric $m$-linear forms ($m\geq 2$) on a Banach space $X$ is not generally norm dense in the Banach space of all continuous symmetric $m$-linear forms on $X$, and the set of norm-attaining continuous $m$-homogeneous polynomials on $X$ is not generally norm dense in the Banach space of all continuous $m$-homogeneous polynomials on $X$ \cite{AAP}. In fact, an example of a Banach space $X$ was given in \cite{AAP} such that the set of norm-attaining bilinear forms on $X\times X$ is not dense in the space of all continuous bilinear forms.  We refer to \cite{PST} for the relationship between the norm-attaining condition for a continuous homogeneous polynomial on a Banach space and the norm-attaining condition for its associated continuous symmetric multilinear form.

If $P\in\mathcal{P}_m(X; Y)$ and $x\in X$, then $D^{k}P(x)$, $2\leq k\leq m$, denotes the $k$th Fr\'echet derivative of $P$ at $x$. Recall that $D^{k}P(x)$ would be, in fact, a symmetric $k$-linear mapping on $X^{k}$, whose associated $k$-homogeneous polynomial will be represented by $\widehat{D}^{k}P(x)$. So, $\widehat{D}^{k}P(x):= \widehat{D^{k}P(x)}$. We just write $DP(x)$ for the first Fr\'echet derivative of $P$ at $x$. If $\widehat{L}\in\mathcal{P}(^{m}X; Y)$, for any vectors $x, y_{1}, \ldots ,y_{k}$ in $X$ and any $k\leq m$ the following identity (see for instance \cite[7.7 Theorem]{Chae}) holds
\begin{equation}\label{k-differential}
\frac{1}{k!}D^{k}\Lh(x)(y_{1}, \ldots, y_{k})={m\choose k}L(x^{m-k}, y_{1}, \ldots, y_{k})\,.
\end{equation}
In particular, for $x, y\in X$
\begin{equation}\label{k-homog-differential}
\frac{1}{k!}\Dh^{k}\Lh(x)y={m\choose k}L(x^{m-k} y^{k})
\end{equation}
and for $k=1$
\begin{equation}\label{1-differential}
D\Lh(x)y=\Dh\Lh(x)y=mL(x^{m-1} y)\,.
\end{equation}
Here, $L(x^{m-k} y^{k})$ denotes $L(\underbrace{x, \ldots, x}_{(m-k)}, \underbrace{y, \ldots, y}_{k})$. For general background on polynomials, we refer to \cite{Chae} and \cite{Dineen}.

Finally, observe that by composing $P\in\mathcal{P}_m(X; Y)$ with a given linear functional and applying the Hahn-Banach theorem, the upper bounds for  $\|D^kP(x)\|$ and $\|\widehat{D}^k P(x)\|$ to be determined are unchanged when $Y$ is replaced by $\mathbb{R}$ or $\mathbb{C}$. Therefore, in proving Bernstein and Markov-type inequalities on real or complex Banach spaces, without loss of generality we can restrict ourselves to scalar-valued polynomials.

Let $r_{n}(t)=\text{sign}(\sin2^n\pi t)$ be the $n$th Rademacher function on [0,1]. The Rademacher functions $(r_{n})$ form an orthonormal set in $L_{2}([0, 1], dt)$ where $dt$ denotes Lebesgue measure on [0,1]. The next formula expresses a well known {\it polarization formula} in a very convenient form (see \cite[Lemma 2]{Sarantopoulos1}):
\begin{equation}\label{polarformula}
L(x_1, \ldots, x_m) = \frac{1}{m!}\int_{0}^{1} r_{1}(t)\cdots r_{m}(t) \Lh\bigg(\sum_{n=1}^{m}r_{n}(t)x_{n}\bigg)\, dt\,.
\end{equation}
Therefore, each $\Lh\in\mathcal{P}({}^{m}X)$ is associated with a unique $L\in{\mathcal L}^{s}({}^{m} X)$ with the property that $\Lh(x)=L(x, \ldots, x)$. In many circumstances \cite{Davie-1,Davie-2,Peller,Varopoulos} it is of interest to compare the norm of $L\in\mathcal{L}^{s}(^{m}X)$ with the norm of  $\Lh\in\mathcal{P}({}^{m}X)$. It follows from \eqref{polarformula}(see \cite{Dineen}) that
\[
\|\Lh\|\leq \|L\|\leq \frac{m^{m}}{m!}\|\Lh\|\,,
\]
for every $L\in{\mathcal L}^{s}({}^{m} X)$. However, the right hand inequality can be tightened for many Banach spaces,  see for instance \cite{Dineen,Harris1,Sarantopoulos1}, and we call
\[
\mathbb{K}(m, X)=\inf \left\{M>0: \|L\|\leq M \|\Lh\|,\,\forall L\in\mathcal{L}^{s}({}^{m}X; \mathbb{K})\right\}
\]
the {\em $m$th polarization constant} of the Banach space $X$. We shall write $\mathbb{R}(m, X)$, $\mathbb{C}(m, X)$ instead of $\mathbb{K}(m, X)$, if the space $X$ is real, complex respectively.

For $L_{p}(\mu)$ spaces we also set
\[
\mathbb{K}(m, p)=\sup\{\mathbb{K}(m, L_{p}(\mu)): \text{$\mu$ is a measure}\}\,.
\]
It is an interesting fact that $\mathbb{K}(m, p)=\mathbb{K}(m, L_{p}(\mu))$, for any $\mu$ with $L_{p}(\mu)$ infinite-dimensional (we refer to \cite{Sarantopoulos1}).


\section{\bf Bernstein-Markov inequalities for polynomials on Banach spaces.}

\subsection{\bf Bernstein-Markov inequalities for polynomials: classical results.}
Let $\mathcal{P}_n(\mathbb{R})$ be the set of all algebraic polynomials of degree at most $n$ with real coefficients. According to a well-known result of Bernstein \cite{Bernstein1}, if $p\in\mathcal{P}_n(\mathbb{R})$ and $\|p\|_{[-1,1]}:= \max_{-1\leq t\leq 1}|p(t)|\leq 1$ then
\begin{equation}\label{bernsteinpointwise}
|p^{\prime}(t)|\leq \frac{n}{\sqrt{1-t^2}}\,,\quad \forall t\in (-1,1)\,.
\end{equation}
It was proved by A. A. Markov that if $p\in\mathcal{P}_n(\mathbb{R})$ and $\|p\|_{[-1, 1]}\leq 1$, then
\begin{equation}\label{classicalrealfirst}
\|p^{\prime}\|_{[-1,1]}\leq n^2\,.
\end{equation}
A. A. Markov's original paper \cite{AMARKOV} dates back to 1889 and it is not readily accessible. For a modern exposition on this and other related topics we refer to \cite{Rahman}. Note that the upper bounds in \eqref{bernsteinpointwise} and \eqref{classicalrealfirst} are sharp since they are attained for the $n$th Chebyshev polynomial $T_n(t)$ (for certain values of $t$ in the case of \eqref{bernsteinpointwise}), where $T_n(t)$ is the polynomial agreeing $\cos(n\arccos{t})$ in the range
$-1<t<1$. Inequality \eqref{bernsteinpointwise} yields a better estimate for $|p^\prime(t)|$ when $t$ is not near $\pm 1$.

In the previous two inequalities we have estimates on the magnitude of the derivative of a polynomial, as compared to the polynomial itself. A related result is the following inequality known as Schur's inequality \cite[p. 233]{BORWEIN}:

For every $p\in\mathcal{P}_{n-1}(\mathbb{R})$,
\begin{equation}\label{schur}
\|p\|_{[-1,1]}\leq n\|p(t)\sqrt{1-t^2}\|_{[-1,1]}\,.
\end{equation}

Observe that Markov's inequality follows immediately from inequalities \eqref{bernsteinpointwise} and \eqref{schur}.

V. A. Markov (brother of A. A. Markov) considered the problem of determining exact bounds for the $k$th derivative of an algebraic polynomial. For $1\leq k\leq n$, if $p\in\mathcal{P}_n(\mathbb{R})$
and $\|p\|_{[-1,1]}\leq 1$, V. A. Markov \cite{VMARKOV} has shown that
\begin{equation}\label{classicalvm}
\|p^{(k)}\|_{[-1,1]}\leq T_n^{(k)}(1)=\frac{n^2(n^2-1^2)\cdots (n^2-(k-1)^2)}{1\cdot3\cdots (2k-1)}\,.
\end{equation}
S. N. Bernstein presented a shorter variational proof of \eqref{classicalvm} in $1938$ (see \cite{Bernstein2}). In 1938 Schaeffer and Duffin \cite{DUFFIN1} have given a rather simple proof of V. A. Markov's inequality. The key in their proof is the following generalization of Bernstein's inequality.

\vspace{.2cm}

\noindent {\bf Theorem A (A. C. Schaeffer \& R. J. Duffin \cite{DUFFIN1}).}
{\em If $p\in\mathcal{P}_n(\mathbb{R})$ with $\|p\|_{[-1,1]}\!\leq\! 1$ and $1\leq k\leq n$, then
\begin{equation}\label{pointwisekth}
|p^{(k)}(t)|^2\leq \big(T_n^{(k)}(t)\big)^2+ \big(S_n^{(k)}(t)\big)^2\,,\quad \forall t\in (-1,1)\,,
\end{equation}
where $S_n(t)$ is the is the polynomial agreeing $\sin(n\arccos{t})$ in the range $-1<t<1$.}

\vspace{.2cm}

In fact, if we define
\begin{equation}\label{defM_k}
\mathcal{M}_{k}(t):= \big(T_n^{(k)}(t)\big)^2+\big(S_n^{(k)}(t)\big)^2\,, \quad\forall\,t\in (-1, 1)\,,
\end{equation}
a close look at the proofs of Lemma 3 and Markoff's Theorem in \cite{DUFFIN1} reveals that the following result holds true.

\vspace{.2cm}

\noindent {\bf Theorem B (A. C. Schaeffer \& R. J. Duffin \cite{DUFFIN1}).}
{\em If $p\in\mathcal{P}_{n-k}(\mathbb{R})$, $1\leq k\leq n$, is such that $|p(t)|^2\leq\mathcal{M}_{k}(t)$\, $\forall\, t\in (-1, 1)$, then}
\[
\|p\|_{[-1, 1]}\leq T^{(k)}_n(1)=\frac{n^2(n^2-1^2)\cdots (n^2-(k-1)^2)}{1\cdot3\cdots (2k-1)}\,.
\]

\vspace{.2cm}

Notice that V. A. Markov's inequality \eqref{classicalvm} and Theorem B together imply Theorem A. Observe also that inequality \eqref{schur} (Schur's inequality) is a special case of Theorem B for $k=1$.

In studying extremal problems usually we normalize the set of polynomials, that is if $p\in\mathcal{P}_n(\mathbb{R})$ we take $|p(t)|\leq 1$ for $-1\leq t\leq 1$. In other words we require that the graph of $p$ is contained in the square $[-1,1]\times[-1, 1]$.

In the last twenty years extensions of the classical Bernstein and Markov-type inequalities to the multivariate case have been widely investigated. In \cite{Harris2} Harris considers the growth of the Fr\'echet derivatives of a polynomial on a normed space when the polynomial has restricted growth on the space. His main concern is with \emph{real} normed spaces. Using the technique of potential theory with external fields, improved estimates on Markov constants of \emph{homogeneous} polynomials over real normed spaces have been given in \cite{RevSar}. For the Markov inequality for multivariate polynomials we also refer to \cite{Kroo1} and \cite{Plesniak}. In $2012$ R\'ev\'esz \cite{Revesz} has given a survey on conjectures and results on the multivariate Bernstein inequality on convex bodies. For more polynomial inequalities in Banach spaces we refer to \cite{BARAN3}.

Finally, it is of importance how the pluripotential theory approach of Baran \cite{BARAN3} and the inscribed ellipse approach of Sarantopoulos \cite{Sarantopoulos2} relate. This is far from obvious and it was in fact unknown for long. However, in $2010$ it was fully clarified in \cite{BLMR}. In fact, Burns, Levenberg, Ma'u and R\'ev\'esz have shown in \cite{BLMR} that the ``inscribed ellipse method" of Sarantopoulos in \cite{Sarantopoulos2} to prove Bernstein-Markov inequalities and the ``pluripotential" proof of Bernstein-Markov inequalities due to Baran \cite{BARAN3} are equivalent.

\subsection{\bf Bernstein-Markov inequalities for polynomials on \textit{real} Hilbert spaces.}

Let $P$ be a polynomial of degree at most $n$ with real coefficients on $\ell_2^m$, the $m$-dimensional Euclidean space $\left(\mathbb{R}^{m}, \langle\cdot, \cdot\rangle\right)$. If $\|P\|\leq 1$ and $\|x\|_2<1$, the first sharp Bernstein and Markov-type inequalities were obtained in 1928 by Kellogg \cite{Kellogg}:
\begin{equation}\label{kellogg}
\|\nabla P(x)\|_{2}\leq\min\bigg\{\frac{n}{\sqrt{1-\|x\|_{2}^{2}}},\, n^{2}\bigg\}
\end{equation}
In other words, if $D_{y}P(x)=DP(x)y=\langle\nabla P(x), y\rangle$ is the directional derivative of $P$ at $x$, in the direction of the unit vector $y$, then the maximum of the absolute value of $D_{y}P(x)$ in any direction $y$ is just the maximum of the magnitude of the gradient of the polynomial and is dominated by the smaller of the two numbers $n/\sqrt{1-\|x\|_2^2}$ and $n^2$. In fact, Kellogg has derived \eqref{kellogg} by showing (see Theorem V in \cite{Kellogg}) that the tangential derivatives of $P$ on the unit sphere $S_{\mathbb{R}^m}$ cannot exceed $n$ in absolute value.

If $K$ is a smooth compact algebraic curve in ${\mathbb R}^2$ and $P$ is a polynomial of degree $\leq n$ in two variables, Bos \emph{et al.} \cite{BOS} have shown that
\[
\|D_TP\|_K\leq Mn\|P\|_{K}\,,
\]
where $D_TP$ denotes tangential derivative of $P$ along $K$, $\|P\|_{K}:= \sup|P|(K)$ and $M>0$ is a constant depending only on $K$. If $K$ is the unit circle, the previous inequality with $M=1$ is just Kellogg's result. For a discussion on this last inequality and for some other related results see \cite{BARAN2} and \cite{Fefferman}.

Harris \cite{Harris1} has extended Kellogg's argument and in the case of a \textit{real} Hilbert space
$(H, \langle\cdot, \cdot\rangle)$, if $P\in\mathcal{P}_{n}(H)$ and $\|P\|\leq 1$, he has obtained the following generalization of (\ref{kellogg}):
\[
|DP(x)y|\leq\min\left\{n\left[\frac{1-\|x\|^2+\langle x, y\rangle^2}{1-\|x\|^2}(1-P(x)^2)\right]^{1/2},
\, n^2\right\}\,,
\]
for all $\|x\|<1$ and $y\in S_H$.

The generalization of Markov's inequality for any derivative of a polynomial on a {\it real} Hilbert space was given in \cite{Gustavo2}. The proof relies on the following extension of Theorem A for polynomials on a {\it real} Hilbert space and the generalization of Theorem B for polynomials on any {\it real} Banach space. Recall that $\mathcal{M}_{k}(t)$ is given by \eqref{defM_k}.

\begin{theorem}\cite[Theorem 4]{Gustavo2}
If $\left(H, \langle\cdot, \cdot\rangle\right)$ is a {\it real} Hilbert space, $P\in {\mathcal P}_n(H)$ with $\|P\|\leq 1$ and $1\leq k\leq n$, then
\[
\|D^{k}P(x)\|^2=\|\widehat{D}^{k}P(x)\|^2\leq\mathcal{M}_{k}(\|x\|)\,,
\]
for every $x\in H$, $\|x\|<1$.
\end{theorem}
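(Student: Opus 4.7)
The plan is to reduce to the classical one-variable Schaeffer--Duffin inequality (Theorem A) by restricting $P$ to the line through $x$ in the direction of an arbitrary unit vector. By the reduction remarked at the end of Section 1, I may assume $P$ is real-valued. Moreover, the classical Banach polarization theorem on real Hilbert spaces ($\|L\|=\|\widehat{L}\|$ for every symmetric multilinear form on $H$) gives $\|D^{k}P(x)\|=\|\widehat{D}^{k}P(x)\|$, so it suffices to show
\[
|\widehat{D}^{k}P(x)(y)|^{2}\le\mathcal{M}_{k}(\|x\|)\quad\text{for every }y\in S_{H}.
\]

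Fix $y\in S_{H}$ and set $a=\langle x,y\rangle$ and $b=\sqrt{1-\|x\|^{2}+a^{2}}$; by Cauchy--Schwarz $|a|\le\|x\|$, hence $b\le 1$. The line $s\mapsto x+sy$ meets $\overline{B}_{H}$ exactly when $s\in[-a-b,-a+b]$. Reparametrizing by $s=-a+b\sigma$ for $\sigma\in[-1,1]$ and noting that $x-ay\perp y$, I define
\[
q(\sigma):=P\bigl((x-ay)+b\sigma y\bigr),\qquad \sigma\in[-1,1].
\]
Then $q$ is a polynomial in $\sigma$ of degree $\le n$, and since $\|(x-ay)+b\sigma y\|^{2}=\|x\|^{2}-a^{2}+b^{2}\sigma^{2}\le 1$ on $[-1,1]$, one has $|q|\le\|P\|\le 1$ there. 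Setting $\sigma_{0}:=a/b\in(-1,1)$, the line passes through $x$ at $\sigma=\sigma_{0}$, and the chain rule gives $q^{(k)}(\sigma_{0})=b^{k}\widehat{D}^{k}P(x)(y)$. Applying Theorem A to $q$ at $\sigma_{0}$ yields
\[
|\widehat{D}^{k}P(x)(y)|^{2}=\frac{|q^{(k)}(\sigma_{0})|^{2}}{b^{2k}}\le\frac{\mathcal{M}_{k}(\sigma_{0})}{b^{2k}}.
\]

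Using the identity $1-\sigma_{0}^{2}=(1-\|x\|^{2})/b^{2}$, I rewrite this bound as $g_{k}(\sigma_{0})/(1-\|x\|^{2})^{k}$, where $g_{k}(t):=\mathcal{M}_{k}(t)(1-t^{2})^{k}$. Because $T_{n}^{(k)}$ and $S_{n}^{(k)}$ have definite parities, $\mathcal{M}_{k}$ and hence $g_{k}$ are even. Moreover $|\sigma_{0}|=|a|/b\le\|x\|$. Thus the theorem reduces to the following

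\textbf{Monotonicity Lemma.} \emph{The function $g_{k}$ is non-decreasing on $[0,1)$.}

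Granting this, $g_{k}(|\sigma_{0}|)\le g_{k}(\|x\|)$, which on dividing by $(1-\|x\|^{2})^{k}$ yields the desired bound $\mathcal{M}_{k}(\|x\|)$.

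I expect the Monotonicity Lemma to be the main obstacle. For $k=1$ the identities $T_{n}'(t)\sqrt{1-t^{2}}=nS_{n}(t)$ and $S_{n}'(t)\sqrt{1-t^{2}}=-nT_{n}(t)$, combined with $T_{n}^{2}+S_{n}^{2}=1$, give $g_{1}\equiv n^{2}$ (constant). For $k\ge 2$ I would argue by induction using the common Chebyshev ODE $(1-t^{2})y''-ty'+n^{2}y=0$ satisfied by $T_{n}$ and $S_{n}$ to derive the closed-form expansion
\[
g_{k}(t)=\sum_{j=0}^{k-1}\frac{c_{k,j}(n)}{(1-t^{2})^{j}}
\]
with non-negative coefficients $c_{k,j}(n)\ge 0$; this is checked in low cases ($g_{2}(t)=n^{2}(n^{2}-1)+n^{2}/(1-t^{2})$, and $g_{3}(t)=n^{2}(n^{2}-1)(n^{2}-4)+3n^{2}(n^{2}-4)/(1-t^{2})+9n^{2}/(1-t^{2})^{2}$). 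Since each $1/(1-t^{2})^{j}$ is non-decreasing on $[0,1)$, the monotonicity of $g_{k}$ follows. Establishing the non-negativity of the coefficients via the inductive use of the Chebyshev ODE is the main technical work.
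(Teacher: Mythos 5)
Your outline is sound and, as far as I can tell, follows the same basic strategy as the proof in \cite{Gustavo2}: reduce to scalar-valued $P$, invoke Banach's theorem for the equality $\|D^k P(x)\|=\|\widehat{D}^k P(x)\|$, restrict $P$ to the chord of $B_H$ through $x$ in direction $y$, rescale the chord to $[-1,1]$ so that $x$ corresponds to the parameter $\sigma_0=a/b$, apply the Schaeffer--Duffin bound (Theorem~A), and then convert back. The bookkeeping is correct: $1-\sigma_0^2=(1-\|x\|^2)/b^2$ gives the bound $g_k(\sigma_0)/(1-\|x\|^2)^k$ with $g_k(t)=\mathcal{M}_k(t)(1-t^2)^k$, $g_k$ is even, and $|\sigma_0|\le\|x\|$. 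So everything does hinge on the Monotonicity Lemma, exactly as you say.

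The genuine gap is that the Monotonicity Lemma is stated but not proved. This is not a routine verification: the lemma \emph{is} the theorem, in the sense that for $k\ge 2$ the Schaeffer--Duffin bound on the chord genuinely depends on the direction $y$ (equality is attained only for $y=x/\|x\|$, where $b=1$), and without the monotonicity of $g_k$ the supremum over $y$ of the right-hand side need not collapse to $\mathcal{M}_k(\|x\|)$. Your proposed strategy---expand $g_k$ as a polynomial in $u=1/(1-t^2)$ and show the coefficients $c_{k,j}$ are nonnegative---is plausible (and I believe the claim is true, with $c_{k,0}=\prod_{i=0}^{k-1}(n^2-i^2)$ and $c_{k,k-1}=[(2k-3)!!]^2n^2$), but the recursion for $c_{k+1,i}$ obtained from the Chebyshev ODE mixes terms with opposite signs: writing $g_{k+1}$ in terms of $g_k$ and $g_{k-1}$ produces, in the $u^i$-coefficient, contributions of the form $c_{k,i-1}-c_{k,i}$ and $(i+k-2)c_{k-1,i-1}-(i+k-1)c_{k-1,i}$, which are not individually nonnegative. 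So the nonnegativity is a real structural fact that needs its own argument (an induction carrying along a stronger hypothesis, or a closed form for the $c_{k,j}$), not just ``unwind the ODE''. Checking $k=1,2,3$ is not a proof; until that lemma is established your argument does not close. (Two small side remarks: you should say a word about why the chord argument is legitimate when $q$ has degree strictly less than $n$---Theorem~A for degree $n$ is still applicable since $\mathcal{P}_m\subset\mathcal{P}_n$---and about the degenerate case $y\parallel x$, where $x-ay=0$ and the argument goes through trivially.)
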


\begin{theorem}\cite[Lemma 1]{Gustavo2}
If $X$ is a {\it real} Banach space and $P\in\mathcal{P}_{n-k}(X)$, $n\geq k$, is such that $|P(x)|^2\leq\mathcal{M}_{k}(\|x\|)$, $\forall\, \|x\|<1$, then $|P(x)|\leq T_{n}^{(k)}(1)$, $\forall\, \|x\|\leq 1$.
\end{theorem}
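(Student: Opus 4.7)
The plan is to reduce the Banach-space statement to the one-dimensional Schaeffer--Duffin inequality (Theorem B) by restricting $P$ to a line through the origin.

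Fix any $x\in X$ with $0<\|x\|\leq 1$. Set $x_{0}:=x/\|x\|\in S_{X}$ and define the one-variable polynomial
\[
q(s):=P(sx_{0}),\qquad s\in\mathbb{R}.
\]
Since $P\in\mathcal{P}_{n-k}(X)$, the function $q$ belongs to $\mathcal{P}_{n-k}(\mathbb{R})$. The goal is to verify the hypothesis of Theorem B for $q$, then evaluate the resulting bound at $s=\|x\|$.

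The only subtlety is that the hypothesis on $P$ involves $\|tx_{0}\|=|s|$, whereas Theorem B is phrased with $\mathcal{M}_{k}(s)$ itself. To bridge this, I would first record that $\mathcal{M}_{k}$ is an \emph{even} function on $(-1,1)$. Indeed, the parities $T_{n}(-t)=(-1)^{n}T_{n}(t)$ and $S_{n}(-t)=(-1)^{n+1}S_{n}(t)$ pass to derivatives as $T_{n}^{(k)}(-t)=(-1)^{n+k}T_{n}^{(k)}(t)$ and $S_{n}^{(k)}(-t)=(-1)^{n+k+1}S_{n}^{(k)}(t)$, so squaring gives
\[
\mathcal{M}_{k}(-t)=\bigl(T_{n}^{(k)}(-t)\bigr)^{2}+\bigl(S_{n}^{(k)}(-t)\bigr)^{2}=\mathcal{M}_{k}(t).
\]
Consequently, for $s\in(-1,1)$ we have $\|sx_{0}\|=|s|<1$, and the hypothesis on $P$ yields
\[
|q(s)|^{2}=|P(sx_{0})|^{2}\leq\mathcal{M}_{k}(|s|)=\mathcal{M}_{k}(s).
\]

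Now Theorem B applies to $q\in\mathcal{P}_{n-k}(\mathbb{R})$ and delivers $\|q\|_{[-1,1]}\leq T_{n}^{(k)}(1)$. Evaluating at the admissible value $s=\|x\|\in(0,1]$ gives
\[
|P(x)|=|q(\|x\|)|\leq\|q\|_{[-1,1]}\leq T_{n}^{(k)}(1),
\]
and the case $x=0$ follows by the continuity of $P$ (or by letting $s\to 0$ in the above bound for any chosen $x_{0}\in S_{X}$). Since $x$ with $\|x\|\leq 1$ was arbitrary, this completes the proof. The argument is essentially a slicing reduction: the only nontrivial ingredient beyond Theorem B is the evenness of $\mathcal{M}_{k}$, and that is the step I expect to be the main (minor) obstacle, as it is the place where one must verify that the restriction $q$ genuinely satisfies the one-dimensional hypothesis on the full interval $(-1,1)$, not merely on $[0,1)$.
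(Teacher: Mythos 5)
Your proof is correct and is essentially the same reduction used in the cited Lemma 1 of \cite{Gustavo2}: restrict $P$ to a line through the origin, note that the resulting one-variable polynomial satisfies the hypothesis of Theorem B on all of $(-1,1)$ because $\mathcal{M}_{k}$ is even (your parity computation for $T_{n}^{(k)}$ and $S_{n}^{(k)}$ is the right way to see this), and invoke Theorem B. The only remark worth making is that the $x=0$ case needs no separate continuity argument, since $|P(0)|=|q(0)|\leq\|q\|_{[-1,1]}\leq T_{n}^{(k)}(1)$ already follows from the same application of Theorem B once any $x_{0}\in S_{X}$ is fixed.
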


Now, the generalization of Markov's inequality \eqref{classicalvm} on a {\it real} Hilbert space follows immediately from the previous two theorems.

\begin{theorem}$(${\bf V. A. Markov's theorem}$)$\cite[Theorem 5]{Gustavo2}
If $\left(H, \langle\cdot, \cdot\rangle\right)$ is a {\it real} Hilbert space, $P\in\mathcal{P}_n(H)$ with $\|P\|\leq 1$ and $1\leq k\leq n$, then
\[
\|D^{k}P(x)\|=\|\widehat{D}^{k}P(x)\|\leq T_n^{(k)}(1)=\frac{n^2(n^2-1^2)\cdots (n^2-(k-1)^2)}
{1\cdot3 \cdots (2k-1)}\,,
\]
for every $x\in B_H$.
\end{theorem}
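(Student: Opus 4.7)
The plan is to combine the two quoted results by slicing: for each fixed direction $y\in B_H$, the map $x\mapsto\Dh^{k}P(x)\,y$ is a polynomial in $x$ of degree $\leq n-k$ on $H$, its pointwise bound on the open unit ball is supplied by the preceding Theorem (Gustavo2, Thm.~4), and this is precisely the hypothesis of the preceding Lemma (Gustavo2, Lem.~1), which converts the pointwise bound on $\{\|x\|<1\}$ into the uniform constant bound $T_{n}^{(k)}(1)$ on $\{\|x\|\leq 1\}$. Taking a supremum over $y$ then gives the claimed Markov bound for $\Dh^{k}P(x)$ on the whole closed unit ball, and the equality $\|D^{k}P(x)\|=\|\Dh^{k}P(x)\|$ is already part of the assertion of Theorem~4.

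Concretely, first fix $y\in H$ with $\|y\|\leq 1$ and set
\[
Q(x):=\Dh^{k}P(x)\,y=D^{k}P(x)(y,\ldots,y).
\]
Write $P=P_{0}+P_{1}+\cdots+P_{n}$ with $P_{j}=\Lh_{j}$, $L_{j}\in\mathcal{L}^{s}({}^{j}H)$. Applying \eqref{k-homog-differential} to each summand gives $\Dh^{k}P_{j}(x)y=k!\binom{j}{k}L_{j}(x^{j-k}y^{k})$, which is a polynomial in $x$ of degree $j-k$; summing over $j\geq k$ shows that $Q\in\mathcal{P}_{n-k}(H)$. Next, for every $x$ with $\|x\|<1$ the $k$-homogeneity of $\Dh^{k}P(x)$ in its argument and the assumption $\|y\|\leq 1$ yield
\[
|Q(x)|^{2}\leq\|\Dh^{k}P(x)\|^{2}\|y\|^{2k}\leq\|\Dh^{k}P(x)\|^{2}\leq\mathcal{M}_{k}(\|x\|),
\]
where the last inequality is Theorem~4. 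Thus $Q$ satisfies the hypothesis of Lemma~1 (the Hilbert space is, in particular, a real Banach space), and Lemma~1 delivers $|Q(x)|\leq T_{n}^{(k)}(1)$ for every $x\in B_{H}$.

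Finally, taking the supremum over $y\in B_{H}$ produces $\|\Dh^{k}P(x)\|\leq T_{n}^{(k)}(1)$ for every $x\in B_{H}$, and the equality $\|D^{k}P(x)\|=\|\Dh^{k}P(x)\|$ is exactly the Hilbert-space polarization identity recorded in Theorem~4. There is no real obstacle here: all of the analytic difficulty was concentrated in the pointwise estimate $\|\Dh^{k}P(x)\|^{2}\leq\mathcal{M}_{k}(\|x\|)$ of Theorem~4 (an extension of Schaeffer--Duffin's Theorem~A to real Hilbert spaces) and in the Banach-space version of Theorem~B appearing as Lemma~1; the step above is essentially a compactness/supremum argument in the direction variable $y$, plus a verification that $Q$ has degree $\leq n-k$.
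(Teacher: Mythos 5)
Your argument is correct and is precisely the slicing argument that the paper leaves implicit when it says the theorem ``follows immediately from the previous two theorems'': fix a direction $y$, observe that $x\mapsto\widehat{D}^{k}P(x)y$ lies in $\mathcal{P}_{n-k}(H)$, feed the pointwise bound of Theorem~4 into Lemma~1, and then take the supremum over $y\in B_H$, with the equality $\|D^{k}P(x)\|=\|\widehat{D}^{k}P(x)\|$ coming from the Hilbert-space polarization identity.
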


\subsection{\bf Bernstein-Markov inequalities for polynomials on \textit{real} Banach spaces.}

Let $K\subset\mathbb{R}^{m}$ be a convex body, i.e. a convex compact set with non-empty interior. If $u$ is a unit vector in $\mathbb{R}^{m}$ then there are precisely two support hyperplanes to $K$ having $u$ for a normal vector. The distance $w(u)$ between these parallel support hyperplanes is the width of $K$ in the direction of $u$. The {\em minimal width} of $K$ is
\[
w(K):= \min_{\|u\|_{2}=1} w(u)\,.
\]
For general background on convexity, we refer to \cite{EGGLESTON}. If $P\in\mathcal{P}_{n}(\mathbb{R}^{m})$ with $\|P\|_K:=\max_{x\in K}|P(x)|$, Wilhelmsen \cite{Wilhelmsen} has shown that
\begin{equation}\label{wilhelmsen-1}
\|\nabla P\|_K=\max_{x\in K}\|\nabla P(x)\|_{2}\leq\frac{4n^2}{w(K)}\|P\|_{K}\,.
\end{equation}
Since $w(B_{\ell_2^m})=2$, the constant in (\ref{wilhelmsen-1}) is two times the constant in (\ref{kellogg}).

Consider now the case where $K\subset\mathbb{R}^{m}$ is a centrally symmetric convex body with center at the origin, in other words $K$ is invariant under $x\mapsto -x$. We call $K$ a ball. A ball $K$ is the unit ball of a unique Banach norm $\|\cdot\|_K$ defined by
\[
\|x\|_{K}=\inf\{t>0: \ x/t\in K\},\quad x\in\mathbb{R}^{m}\,.
\]
If $P\in\mathcal{P}_{n}(\mathbb{R}^m)$, $x\in\interior{K}$ and $y\in S_{\mathbb{R}^m}$, the next sharp Bernstein and Markov-type inequalities follow from the work of Sarantopoulos\cite{Sarantopoulos2}:
\begin{align}
|DP(x)y| &\leq \frac{2n}{w(K)\sqrt{1-\|x\|_K^2}}\|P\|_{K}\,, \label{sarantopoulos1}\\
\|\nabla P\|_K &\leq\frac{2n^2}{w(K)}\|P\|_{K}\,. \label{sarantopoulos2}
\end{align}
In fact, if $X$ is a \textit{real} Banach space and $P\in\mathcal{P}_{n}(X)$, $\|P\|\leq 1$, for the first Fr\'echet derivative of $P$ it has been proved in \cite{Sarantopoulos2} that

\begin{equation}\label{firstgen}
\|DP(x)\|\leq\min\left\{n\frac{\sqrt{1-P(x)^2}}{\sqrt{1-\|x\|^2}},\, n^{2}\right\},\quad\text{for every $\|x\|<1$}\,.
\end{equation}

Using methods of several complex variables, inequalities (\ref{sarantopoulos1}) and (\ref{sarantopoulos2}) were proved independently by Baran \cite{BARAN1}. For non-symmetric convex bodies, Kro\'o and R\'ev\'esz \cite{KR} have derived a Bernstein-type inequality and they have shown \eqref{wilhelmsen-1} with constant $(4n^2-2n)/w(K)$. They have also achieved a further improvement on the Markov constant in case $K$ is a triangle in ${\mathbb R}^2$. But, as it has been shown in \cite{Bialas}, in the non-symmetric case the Markov constant in \eqref{wilhelmsen-1} has to be larger than $2$.

Finally, a proof of Markov's inequality for any derivative of a polynomial on a {\it real} Banach space was given by Skalyga \cite{Skalyga1} in 2005 and additional discussion is given in \cite{Skalyga2}. In 2010 Harris \cite{Harris4} has given another proof which depends on a Lagrange interpolation formula for the Chebyshev nodes and a Christoffel-Darboux identity for the corresponding bivariate Lagrange polynomials \cite{Harris3}.

\begin{theorem}$(${\bf V. A. Markov's theorem}$)$\cite{Skalyga1, Skalyga2, Harris3}
If $X$ is a {\it real} Banach space, $P\in\mathcal{P}_n(X)$ with $\|P\|\leq 1$ and $1\leq k\leq n$, then
\[
\|\widehat{D}^{k}P(x)\|\leq T_n^{(k)}(1)\,,
\]
for all $x\in X$, $\|x\|\leq 1$.
\end{theorem}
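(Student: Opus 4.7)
The plan is to reduce the statement to the univariate V. A. Markov inequality via a directional argument that mirrors the Hilbert-space proof of \cite{Gustavo2}. By the Hahn--Banach reduction noted at the end of Section~1 one may assume $P$ is scalar-valued. For each fixed $y\in S_{X}$ set
\[
Q_{y}(x) := \widehat{D}^{k}P(x)(y).
\]
The $k$-homogeneous polynomial $\widehat{D}^{k}P(x)$ depends polynomially on the base point $x$ to degree at most $n-k$, so $Q_{y}\in\mathcal{P}_{n-k}(X)$, and
\[
\|\widehat{D}^{k}P(x)\| \;=\; \sup_{y\in S_{X}}|Q_{y}(x)|.
\]
It therefore suffices to prove, uniformly in $y\in S_{X}$, that $|Q_{y}(x)|\leq T_{n}^{(k)}(1)$ for every $x\in B_{X}$.

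The central intermediate step---playing the role in the Banach setting of the Schaeffer--Duffin pointwise inequality \eqref{pointwisekth}---is the estimate
\begin{equation}\label{SDplan}
|Q_{y}(x)|^{2} \;\leq\; \mathcal{M}_{k}(\|x\|)\qquad \text{for every } \|x\|<1,\ y\in S_{X}.
\end{equation}
Once \eqref{SDplan} is established, Theorem~3 applied to $Q_{y}\in\mathcal{P}_{n-k}(X)$ immediately upgrades this pointwise estimate to the global bound $|Q_{y}(x)|\leq T_{n}^{(k)}(1)$ for $\|x\|\leq 1$; taking the supremum over $y$ then completes the proof. This is exactly the two-step architecture already used in the Hilbert case (Theorem~2 feeding into Theorem~3).

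To establish \eqref{SDplan}, fix $x$ with $\|x\|<1$ and $y\in S_{X}$, and construct a polynomial curve $\gamma:[-1,1]\to B_{X}$ together with a parameter $t_{0}\in(-1,1)$ such that $\gamma(t_{0})=x$, $\gamma'(t_{0})=\sigma y$ for some $\sigma\neq 0$, and $t_{0}$ is tied to $\|x\|$ in exactly the way needed for the Schaeffer--Duffin function $\mathcal{M}_{k}$---in effect, one aims for the normalisation $t_{0}=\|x\|$. Setting $p(t):=P(\gamma(t))$, the univariate Schaeffer--Duffin Theorem~A gives $|p^{(k)}(t_{0})|^{2}\leq \mathcal{M}_{k}(t_{0})$. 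Expanding $p^{(k)}(t_{0})$ via Fa\`a di Bruno's formula produces a nonzero multiple of $Q_{y}(x)$ as the leading term plus corrections coming from the lower-order Fr\'echet derivatives $\widehat{D}^{j}P(x)$ with $j<k$ and from the higher derivatives $\gamma^{(j)}(t_{0})$; arranging $\gamma$ so that these higher derivatives vanish, or using induction on $k$ to absorb the lower-order Fr\'echet derivatives, yields \eqref{SDplan}.

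The hardest part is precisely the construction of $\gamma$ in an arbitrary real Banach space. In a Hilbert space (Theorem~2) the natural choice is the inscribed ellipse in the two-dimensional section spanned by $x$ and $y$, where the Pythagorean identity automatically places $x$ at parameter $t_{0}=\|x\|$ and provides enough geometric room for $\sigma$ to recover the full Schaeffer--Duffin constant; a naive linear segment $\gamma(t)=x+\sigma(t-\|x\|)y$ does reduce the problem to a polynomial on $[-1,1]$ but the best admissible $\sigma$ is only $O((1-\|x\|)/(1+\|x\|))$, which loses a large factor in \eqref{SDplan}. This is exactly the point at which the proofs of Skalyga \cite{Skalyga1,Skalyga2} and Harris \cite{Harris3,Harris4} diverge from the Hilbert argument: Skalyga proceeds by a convex-geometric analysis of the level sets of $P$, while Harris replaces the inscribed ellipse entirely by Lagrange interpolation at the Chebyshev nodes together with a Christoffel--Darboux identity for the resulting bivariate Lagrange polynomials. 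Either route ultimately delivers the curve $\gamma$ (or directly the identity it would give), validates \eqref{SDplan}, and Theorem~3 then closes the argument as above.
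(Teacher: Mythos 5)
The paper does not prove this theorem at all: it is stated as a review of known results, with the proofs attributed to Skalyga \cite{Skalyga1, Skalyga2} and Harris \cite{Harris3, Harris4}. So the relevant comparison is with those references rather than with any in-paper argument.

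Your proposal has a genuine gap precisely at the step you flag as ``the hardest part.'' You reduce the theorem to the Schaeffer--Duffin-type pointwise estimate
\[
|\widehat{D}^{k}P(x)(y)|^{2}\leq \mathcal{M}_{k}(\|x\|),\qquad \|x\|<1,\ y\in S_{X},
\]
on an arbitrary real Banach space, and then appeal to Skalyga and Harris to deliver it. But neither of them proves any such thing. Their arguments establish the endpoint bound $\|\widehat{D}^{k}P(x)\|\leq T_{n}^{(k)}(1)$ directly, bypassing the pointwise inequality entirely --- and the pointwise inequality is \emph{strictly stronger} for $\|x\|$ away from $1$ (already for $k=1$ one has $\mathcal{M}_{1}(0)=n^{2}$ versus $T_{n}'(1)^{2}=n^{4}$), so it cannot be recovered from the Markov bound after the fact. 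The paper signals exactly this: the pointwise estimate \cite[Theorem 4]{Gustavo2} is stated only for \emph{real Hilbert} spaces, and you yourself observe that the inscribed-ellipse construction that places $x$ at parameter $t_{0}=\|x\|$ with a usable tangent exploits the Pythagorean identity, while the linear-segment surrogate in a general Banach space only gives $\sigma=O((1-\|x\|)/(1+\|x\|))$, which is far too small. In fact Harris's route does not use \cite[Lemma 1]{Gustavo2} as its closing step at all: the Christoffel--Darboux/Lagrange-interpolation machinery replaces the two-step Schaeffer--Duffin architecture rather than feeding into it. Thus the sentence ``either route ultimately delivers the curve $\gamma$ (or directly the identity it would give), validates \eqref{SDplan}, and Theorem~3 then closes the argument'' is incorrect, and so what you have is a correct \emph{reformulation} --- \eqref{SDplan} together with \cite[Lemma 1]{Gustavo2} would indeed imply the theorem --- but not a proof, since \eqref{SDplan} for general real Banach spaces and $k\geq 2$ is neither established in your argument nor by the cited works.
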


Kro\'o \cite{Kroo2} has derived certain Bernstein-Markov inequalities for multivariate polynomials on convex and star-like domains in finite dimensional {\em real} $L_{p}(\mu)$ spaces, $1\leq p<\infty$. In \cite[Theorem 6]{EI-1} Eskenazis and Ivanisvili have obtained dimension independent Bernstein-Markov inequality in Gauss space. That is, for each $1\leq p<\infty$ there is a constant $C_{p}>0$ such that for any $k\geq 1$ and all polynomials $P$ on $\mathbb{R}^{k}$
\[
\|\nabla P\|_{L_{p}(d\gamma_k)}\leq C_{p}(\deg{P})^{\alpha}\|P\|_{L_{p}(d\gamma_k)}\,,
\]
where $d\gamma_{k}(x)=\frac{e^{-\|x\|_{2}^{2}/2}}{\sqrt{(2\pi)^{k}}}\,dx$ is the standard Gaussian measure on $\mathbb{R}^{k}$, $\alpha=\frac{1}{2}+\frac{1}{\pi}\arctan\big(\frac{|p-2|}{2\sqrt{p-1}}\big)$ and
\[
\|\nabla P\|_{L_{p}(d\gamma_k)}:= \left(\int_{\mathbb{R}^{k}} \bigg(\sum_{j=1}^{k} (\partial_{j}P)^{2}(x)\bigg)^{p/2}\, d\gamma_{k}(x)\right)^{1/p}\,.
\]
We also refer to \cite{EI-2} for polynomial inequalities on the Hamming cube.


\section{\bf Bernstein-Markov inequalities for homogeneous polynomials on $L_{p}(\mu)$ spaces.}

\subsection{\bf Bernstein and Markov-type estimates for homogeneous polynomials on $L_{p}(\mu)$ spaces.}

In the case of a continuous homogeneous polynomial $P\in\mathcal{P}(^{m}X; Y)$, where $X$ and $Y$ are {\it real} Banach spaces, the constant $c_{m, k}$ in V. A. Markov's inequality
\[
\|\widehat{D}^{k}P\|\leq c_{m, k}\|P\|\,,
\]
can be improved and is considerably better than $T_{m}^{(k)}(1)$.

For continuous homogeneous polynomials on {\it real} Banach spaces we have the following Bernstein and Markov-type inequalities.

\begin{theorem}\cite[Theorem 3]{Sarantopoulos2}\label{general-bern-ineq}
If $X$ is a {\it real} Banach space and $\Lh: X\rightarrow \mathbb{R}$ is a continuous $m$-homogeneous polynomial, then we have the following Bernstein-type inequalities
\begin{align}
(a)\qquad \|\Dh^{k}\Lh(x)\| &\leq {m\choose k}\frac{k!} {\big(\sqrt{1-\|x\|^{2}}\big)^{k}}\|\Lh\|\label{general-bern-ineq-1}\\
\text{and}\notag\\
(b)\qquad \|D^{k}\Lh(x)\| &\leq {m\choose k} \frac{k^{k}}{\big(\sqrt{1-\|x\|^{2}}\big)^{k}}\|\Lh\|\,,\label{general-bern-ineq-2}
\end{align}
for any $\|x\|<1$ and $k\leq m$.
\end{theorem}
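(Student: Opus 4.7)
The plan is to establish part (a) directly, then deduce part (b) via the standard polarization inequality. First, by composing with an arbitrary continuous linear functional on the range space and invoking Hahn--Banach, I may assume $\widehat L$ is scalar-valued. In view of identity~\eqref{k-homog-differential}, which gives $\widehat D^k\widehat L(x)y = k!\binom{m}{k}L(x^{m-k}y^k)$, the bound (a) is equivalent to showing that
\[
|L(x^{m-k}y^k)| \leq \frac{\|\widehat L\|}{(1-\|x\|^2)^{k/2}},\qquad \|x\|<1,\ \|y\|\leq 1.
\]

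The heart of the proof will be the inscribed-curve construction. Fix $x,y$ as above and set $\sigma:=\sqrt{1-\|x\|^2}$. A one-line Cauchy--Schwarz estimate shows that the curve $u(\theta):=\cos\theta\,x+\sigma\sin\theta\,y$ satisfies
\[
\|u(\theta)\| \leq |\cos\theta|\,\|x\| + |\sin\theta|\,\sigma \leq \sqrt{\cos^2\theta+\sin^2\theta}\sqrt{\|x\|^2+\sigma^2} = 1
\]
for every real $\theta$, so $|\widehat L(u(\theta))|\leq\|\widehat L\|$. Using the $m$-homogeneity of $\widehat L$ and the substitution $\lambda=\sigma\tan\theta$, this turns into the pointwise bound
\[
|p(\lambda)| \leq \|\widehat L\|\bigl(1+\lambda^2/\sigma^2\bigr)^{m/2},\qquad \lambda\in\mathbb{R},
\]
on the one-variable polynomial $p(\lambda):=\widehat L(x+\lambda y)$ of degree $m$ whose coefficient of $\lambda^k$ equals $\binom{m}{k}L(x^{m-k}y^k)$.

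I then invoke the following classical univariate inequality: \emph{if $P$ is a real polynomial of degree $\leq m$ satisfying $|P(\lambda)|\leq C(1+\lambda^2)^{m/2}$ for all real $\lambda$, then its $k$-th Taylor coefficient at $0$ has absolute value at most $\binom{m}{k}C$.} Applied, after the rescaling $\lambda=\sigma\mu$, to $q(\mu):=p(\sigma\mu)$ with $C=\|\widehat L\|$, this yields $\sigma^k\binom{m}{k}|L(x^{m-k}y^k)|\leq \binom{m}{k}\|\widehat L\|$, i.e.\ exactly the bound required, completing (a). Part (b) then follows by combining (a) with the polarization inequality $\|D^k\widehat L(x)\|\leq (k^k/k!)\,\|\widehat D^k\widehat L(x)\|$---valid on an arbitrary Banach space and a direct consequence of the polarization formula~\eqref{polarformula} applied to the symmetric $k$-linear form $D^k\widehat L(x)$ and its associated $k$-homogeneous polynomial; multiplying the right-hand side of (a) by $k^k/k!$ converts the constant $\binom{m}{k}k!$ into $\binom{m}{k}k^k$.

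The main obstacle is the univariate coefficient inequality cited above. Its sharpness (witnessed by the polynomials $\operatorname{Re}(1+i\lambda)^m$ and $\operatorname{Im}(1+i\lambda)^m$, for which equality is attained) is delicate: the naive factorization $|P(\lambda)|\leq C|1\pm i\lambda|^m$ combined with a Phragm\'en--Lindel\"of argument in the two complex half-planes and a Cauchy contour integral only produces a bound with a sub-optimal constant of order $\sqrt{k(m-k)/m}\,\binom{m}{k}$. To obtain the sharp constant $\binom{m}{k}$ I will instead pass, via the substitution $\lambda=\tan\psi$, to the real trigonometric polynomial $T(\psi):=P(\tan\psi)\cos^m\psi=\sum_j c_j\cos^{m-j}\psi\sin^j\psi$ of degree $m$ with $\|T\|_\infty\leq C$, and then extract $c_k$ by averaging $T$ against an appropriate Fourier/Chebyshev kernel at frequency $m-2k$---equivalently, by exploiting the duality between homogeneous polynomials of degree $m$ in two real variables (with the sup-norm on the unit circle) and trigonometric polynomials of degree $m$ on the circle.
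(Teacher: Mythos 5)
Your proposal is correct and reproduces essentially the inscribed-ellipse method of Sarantopoulos \cite{Sarantopoulos2}, which is exactly the source this paper cites for the theorem: construct the curve $\cos\theta\,x+\sqrt{1-\|x\|^2}\sin\theta\,y$ inside $B_X$, reduce to the pointwise bound $|p(\lambda)|\le\|\Lh\|(1+\lambda^2/\sigma^2)^{m/2}$, invoke the van der Corput--Schaake coefficient inequality, and obtain (b) from (a) by polarization. The only soft spot is that you leave the univariate coefficient lemma as a sketch, but it is the classical result from \cite{CS} and your trigonometric route to it is the standard one.
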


\begin{corollary}\cite[Corollary]{Sarantopoulos2}\label{general-markov-ineq}
If $X$ is a {\it real} Banach space and $\Lh: X\rightarrow \mathbb{R}$ is a continuous $m$-homogeneous polynomial, then we have the following Markov-type inequalities
\begin{align}
(a)\qquad \|\Dh^{k}\Lh(x)\| &\leq {m\choose k} \frac{k!m^{m/2}}{(m-k)^{(m-k)/2}k^{k/2}}\|\Lh\|\label{general-markov-ineq-1}\\
\text{and}\notag\\
(b)\qquad\|D^{k}\Lh(x)\| &\leq {m\choose k}\frac{m^{m/2}k^{k/2}}{(m-k)^{(m-k)/2}}\|\Lh\|\,,
\end{align}
for any $\|x\|\leq 1$ and $k\leq m$.
\end{corollary}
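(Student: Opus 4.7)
The plan is to combine Theorem \ref{general-bern-ineq} with the homogeneity of the derivatives of $\Lh$, after which the result reduces to a one-variable optimization in a scaling parameter $r\in(0,1)$; this is the standard ``Markov from Bernstein'' device.

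First I would record the key homogeneity observation: since $\Lh(tx)=t^{m}\Lh(x)$ for all $t>0$, differentiating $k$ times in $x$ yields
\[
D^{k}\Lh(tx)=t^{m-k}\,D^{k}\Lh(x),\qquad \Dh^{k}\Lh(tx)=t^{m-k}\,\Dh^{k}\Lh(x),
\]
so both $D^{k}\Lh$ and $\Dh^{k}\Lh$ depend on $x$ as $(m-k)$-homogeneous quantities (valued in $\mathcal{L}^{s}({}^{k}X;\mathbb{R})$ and $\mathcal{P}({}^{k}X;\mathbb{R})$ respectively).

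Next, fix $x$ with $\|x\|\le 1$ and any $r\in(0,1)$. Since $\|rx\|\le r<1$, Theorem \ref{general-bern-ineq}(a) applies at the point $rx$ and gives
\[
\|\Dh^{k}\Lh(rx)\|\le \binom{m}{k}\frac{k!}{(1-\|rx\|^{2})^{k/2}}\|\Lh\|\le \binom{m}{k}\frac{k!}{(1-r^{2})^{k/2}}\|\Lh\|.
\]
On the other hand, the homogeneity above says $\|\Dh^{k}\Lh(rx)\|=r^{m-k}\|\Dh^{k}\Lh(x)\|$, so dividing through we obtain, uniformly in $\|x\|\le 1$,
\[
\|\Dh^{k}\Lh(x)\|\le \binom{m}{k}\frac{k!}{r^{m-k}(1-r^{2})^{k/2}}\|\Lh\|,\qquad 0<r<1.
\]

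The remaining step is a calculus exercise: maximizing $r\mapsto r^{m-k}(1-r^{2})^{k/2}$ on $(0,1)$ gives the unique critical point $r_{\ast}=\sqrt{(m-k)/m}$ (so that $1-r_{\ast}^{2}=k/m$), at which
\[
\min_{0<r<1}\frac{1}{r^{m-k}(1-r^{2})^{k/2}}=\frac{m^{m/2}}{(m-k)^{(m-k)/2}\,k^{k/2}}.
\]
Substituting produces part (a). Part (b) follows by the identical argument starting from Theorem \ref{general-bern-ineq}(b); the factor $k!$ is replaced by $k^{k}$, and after cancelling $k^{k}/k^{k/2}=k^{k/2}$ one obtains $\binom{m}{k}\,m^{m/2}k^{k/2}/(m-k)^{(m-k)/2}\|\Lh\|$. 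I do not anticipate any serious obstacle: the only points needing attention are the degenerate case $k=m$, where $r_{\ast}=0$ sits on the boundary and the constant formally becomes $m!$ under the convention $0^{0}=1$ (matching the trivial evaluation $\Dh^{m}\Lh(0)$), and the uniformity of the bound over $B_{X}$, which is automatic since the right-hand side does not depend on $x$.
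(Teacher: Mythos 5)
Your proof is correct and follows exactly the scaling-plus-optimization device that the paper itself deploys in Proposition~\ref{complexmarkov-ineq} (and that Sarantopoulos uses in the cited source): exploit the $(m-k)$-homogeneity of $\Dh^{k}\Lh$ and $D^{k}\Lh$, apply Theorem~\ref{general-bern-ineq} at the point $rx$, and minimize $1/\bigl(r^{m-k}(1-r^{2})^{k/2}\bigr)$ over $r\in(0,1)$ at $r_{\ast}=\sqrt{(m-k)/m}$. Your algebra for both constants checks out, and your remark on the boundary case $k=m$ is a sensible precaution, so there is nothing to add.
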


Now we prove Bernstein and Markov-type inequalities for homogeneous polynomials on any {\it complex} $L_{p}(\mu)$ space, $1\leq p\leq\infty$. For the proof we need the \textit{generalized Clarkson inequality}
\begin{equation}\label{Clarkson-gen}
\left(\|x_1+x_2\|_{p}^{\la'}+\|x_1-x_2\|_{p}^{\la'}\right)^{1/\la'}
\leq 2^{1/\la'}\left(\|x_1\|_{p}^{\la}+\|x_2\|_{p}^{\la}\right)^{1/\la}\,,
\end{equation}
where $x_1, x_2\in L_{p}(\mu)$ and $1\leq\la\leq\min\{p, p'\}$. Here, as usual, $\la'=\la/(\lambda-1)$ and $p'=p/(p-1)$ are the conjugate exponents of $\la$ and $p$ respectively. Inequality \eqref{Clarkson-gen} is a special case for $m=2$ of the following $L_p$-inequality
\begin{equation}\label{WW-ineq}
\bigg(\int_{0}^{1} \big\|\sum_{i=1}^{m} r_{i}(t)x_{i}\big\|_{p}^{\la'}\, dt\bigg)^{1/\la'} \leq\big(\sum_{i=1}^{m} \|x_{i}\|_{p}^{\la}\big)^{1/\la}\,,
\end{equation}
for $x_{i}\in L_{p}(\mu)$, $1\leq i\leq m$ and $1\leq\la\leq\min\{p, p'\}$. We refer to \cite{WW} for this and other similar $L_{p}$ inequalities.
Setting $\la=p$ or $\lambda=p'$, inequality \eqref{Clarkson-gen} gives the classical Clarkson inequalities:
\begin{align*}
\left(\|x_1+x_2\|_{p}^{p'}+\|x_1-x_2\|_{p}^{p'}\right)^{1/p'} &\leq 2^{1/p'} \left(\|x_1\|_{p}^{p}+\|x_2\|_{p}^{p}\right)^{1/p}\,,\quad 1\leq p\leq 2\,,\\
\left(\|x_1+x_2\|_{p}^{p}+\|x_1-x_2\|_{p}^{p}\right)^{1/p} &\leq 2^{1/p} \left(\|x_1\|_{p}^{p'}+\|x_2\|_{p}^{p'}\right)^{1/p'}\,,\quad 2\leq p\leq\infty\,.
\end{align*}

\begin{theorem}\label{bern-ineq}
Let $\Lh: L_{p}(\mu)\rightarrow \mathbb{C}$ be a continuous $m$-homogeneous polynomial, $m\geq 2$, on the {\it complex} $L_{p}(\mu)$ space. If $m'$ and $p'$ are the conjugate exponents of $m$ and $p$ respectively, for $k\leq m$ and every $x\in L_{p}(\mu)$, $\|x\|_{p}<1$, we have the following Bernstein-type inequalities
\begin{equation}\label{bern-ineq-1}
\|\Dh^{k}\Lh(x)\|\leq \begin{cases} \frac{k!}{(1-\|x\|_p^{p})^{k/p}}\|\Lh\| &\text{$1\leq p\leq m'$ ,}\\
\frac{k!}{(1-\|x\|_p^{m'})^{k/m'}}\|\Lh\| &\text{$m'\leq p\leq m$ ,}\\
\frac{k!}{(1-\|x\|_p^{p'})^{k/p'}}\|\Lh\| &\text{$m\leq p\leq\infty$ .}
                      \end{cases}
\end{equation}
\end{theorem}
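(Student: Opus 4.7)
The plan is to extract the coefficient $L(x^{m-k}y^{k})$ of $\widehat{L}(x+\zeta y)$ via Cauchy's integral formula in $\zeta\in\mathbb{C}$, and then to control the resulting circle-average of $\|x+\rho e^{i\theta}y\|_{p}^{m}$ by means of the generalized Clarkson inequality \eqref{Clarkson-gen}. Fix $x\in L_{p}(\mu)$ with $\|x\|_{p}<1$ and $y\in L_{p}(\mu)$ with $\|y\|_{p}=1$; in view of \eqref{k-homog-differential} it is enough to estimate $|L(x^{m-k}y^{k})|$. The function
$$\zeta\mapsto\widehat{L}(x+\zeta y)=\sum_{j=0}^{m}\binom{m}{j}\zeta^{j}L(x^{m-j}y^{j})$$
is a polynomial of degree $m$ in $\zeta$, so for every $\rho>0$ Cauchy's formula gives
$$\binom{m}{k}\rho^{k}L(x^{m-k}y^{k})=\frac{1}{2\pi}\int_{0}^{2\pi}\widehat{L}(x+\rho e^{i\theta}y)e^{-ik\theta}\,d\theta.$$
Combining with the elementary estimate $|\widehat{L}(z)|\leq\|z\|_{p}^{m}\|\widehat{L}\|$ and the identity \eqref{k-homog-differential} one obtains
$$\rho^{k}|\widehat{D}^{k}\widehat{L}(x)y|\leq k!\,\|\widehat{L}\|\cdot\frac{1}{2\pi}\int_{0}^{2\pi}\|x+\rho e^{i\theta}y\|_{p}^{m}\,d\theta.$$

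Next I would set $\lambda:=\min\{p,p',m'\}$, which in the three subranges $1\leq p\leq m'$, $m'\leq p\leq m$, $m\leq p\leq\infty$ reduces to $p$, $m'$, and $p'$ respectively. In every case $1\leq\lambda\leq\min\{p,p'\}$, so inequality \eqref{Clarkson-gen} applies with $x_{1}=x$ and $x_{2}=\rho e^{i\theta}y$; for each $\theta$ this yields
$$\|x+\rho e^{i\theta}y\|_{p}^{\lambda'}+\|x-\rho e^{i\theta}y\|_{p}^{\lambda'}\leq 2(\|x\|_{p}^{\lambda}+\rho^{\lambda})^{\lambda'/\lambda}.$$
Integrating in $\theta$ and performing the substitution $\theta\mapsto\theta+\pi$ on the second summand collapses the two terms to one and produces the circle-integral form
$$\frac{1}{2\pi}\int_{0}^{2\pi}\|x+\rho e^{i\theta}y\|_{p}^{\lambda'}\,d\theta\leq(\|x\|_{p}^{\lambda}+\rho^{\lambda})^{\lambda'/\lambda}.$$
The same case analysis gives $\lambda\leq m'$, equivalently $\lambda'\geq m$, in all three regimes; so Jensen's inequality for the probability measure $d\theta/2\pi$ upgrades the $L^{\lambda'}$-average to the $L^{m}$-average,
$$\frac{1}{2\pi}\int_{0}^{2\pi}\|x+\rho e^{i\theta}y\|_{p}^{m}\,d\theta\leq(\|x\|_{p}^{\lambda}+\rho^{\lambda})^{m/\lambda}.$$

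Feeding this back into the Cauchy estimate I obtain
$$\rho^{k}|\widehat{D}^{k}\widehat{L}(x)y|\leq k!\,\|\widehat{L}\|\,(\|x\|_{p}^{\lambda}+\rho^{\lambda})^{m/\lambda},$$
and the optimal choice $\rho=(1-\|x\|_{p}^{\lambda})^{1/\lambda}$, which is admissible because $\|x\|_{p}<1$, collapses the right-hand side to $k!\,\|\widehat{L}\|(1-\|x\|_{p}^{\lambda})^{-k/\lambda}$; taking the supremum over unit vectors $y$ yields the announced bound. The only genuinely delicate point is the bookkeeping with conjugate exponents: one must verify that $\lambda=\min\{p,p',m'\}$ truly coincides with the advertised values $p$, $m'$, $p'$ on the three subranges, and that the dual inequality $\lambda'\geq m$ holds in each case so that Jensen's step is legitimate. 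Both reductions follow from routine manipulations of $p'=p/(p-1)$ and $m'=m/(m-1)$, but they are what produce the three-branch form of the final statement.
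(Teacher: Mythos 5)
Your argument is correct and is essentially the same as the paper's proof: both rely on the generalized Clarkson inequality \eqref{Clarkson-gen} (with the same exponent $\lambda$ in each $p$-range, your $\lambda=\min\{p,p',m'\}$ being a uniform way of writing the paper's case split), a power-mean step between the exponents $\lambda'$ and $m$, the scaling $\rho=(1-\|x\|_p^{\lambda})^{1/\lambda}$, and Cauchy estimates for the $k$th Taylor coefficient of $\zeta\mapsto\widehat{L}(x+\zeta y)$. The only cosmetic difference is that the paper bounds the auxiliary polynomial $q(z)=\widehat{L}(x+cyz)+(-1)^{k}\widehat{L}(x-cyz)$ on the unit disk and reads off $q^{(k)}(0)$, whereas you use the Cauchy integral formula directly and exploit the $\theta\mapsto\theta+\pi$ symmetry of the circle average to collapse the two terms produced by Clarkson's inequality.
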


\begin{proof}
\underline{\it{1st case}}: Let $1\leq p\leq m'\Leftrightarrow m\leq p'\leq\infty$ or $m\leq p \leq \infty \Leftrightarrow 1\leq p'\leq m'$. If $\la=\min\{p, p'\}$, then $\la=p$, for $1\leq p\leq m'$ and $\la=p'$, for $m\leq p\leq\infty$. For every $x, y\in L_{p}(\mu)$, $\|x\|_{p}<1$, $\|y\|_{p}=1$ and every $z\in\mathbb{C}$ put
\[
q(z):= \Lh\big(x+(1-\|x\|_p^{\la})^{1/\la}yz\big) +(-1)^{k}\Lh\big(x-(1-\|x\|_p^{\la})^{1/\la}yz\big)\,.
\]
Then $q$ is a polynomial of degree $\leq m$ on $\mathbb{C}$ with
\[
|q(z)|\leq \|\Lh\|\left\{\|x+(1-\|x\|_p^{\la})^{1/\la}yz)\|_p^{m} +\|x-(1-\|x\|_p^{\la})^{1/\la}yz)\|_p^{m}\right\}\,.
\]
Applying H\"older's inequality first and then Clarkson's inequality \eqref{Clarkson-gen}, for $|z|\leq 1$ we have
\begin{eqnarray*}
\lefteqn{\|x+(1-\|x\|_p^{\la})^{1/\la}yz)\|_p^{m}+\|x-(1-\|x\|_p^{\la})^{1/\la}yz)\|_p^{m}}\\
& &\leq 2^{1-m/\la'}\left\{\|x+(1-\|x\|_p^{\la})^{1/\la}yz)\|_p^{\la'}+\|x-(1-\|x\|_p^{\la})^{1/\la}yz)\|_p^{\la'}\right\}^{m/\la'}\\
& &\leq 2^{1-m/\la'}\cdot 2^{m/\la'}\left\{\|x\|_p^{\la} +\|(1-\|x\|_p^{\la})^{1/\la}yz)\|_p^{\la}\right\}^{m/\la}\\
& &\leq 2\left\{\|x\|_p^{\la}+(1-\|x\|_p^{\la})\right\}^{m/\la}=2\,.
\end{eqnarray*}
Hence, $|q(z)|\leq 2\|\Lh\|$, for every $|z|\leq 1$ and by the Cauchy estimates $|q^{(k)}(0)|\leq k!\cdot 2\|\Lh\|$. Since
$q^{(k)}(0)=2(1-\|x\|_p^{\la})^{k/\la}\Dh^{k}\Lh(x)y$, we have
\[
\|\Dh^{k}\Lh(x)\|\leq\frac{k!}{(1-\|x\|_p^{\la})^{k/\la}}\|\Lh\|
\]
and this proves the first and the third estimate in \eqref{bern-ineq-1}.

\underline{\it{2nd case}}: Let $m'\leq p\leq m\Leftrightarrow m'\leq p'\leq m$. For every $x, y\in L_{p}(\mu)$, $\|x\|_{p}<1$, $\|y\|_{p}=1$ and every $z\in\mathbb{C}$ put
\[
q(z):= \Lh\big(x+(1-\|x\|_p^{m'})^{1/m'}yz\big)+(-1)^{k}\Lh\big(x-(1-\|x\|_p^{m'})^{1/m'}yz\big)\,.
\]
Then $q$ is a polynomial of degree $\leq m$ on $\mathbb{C}$ with
\[
|q(z)|\leq \|\Lh\|\left\{\|x+(1-\|x\|_p^{m'})^{1/m'}yz)\|_p^{m}+\|x-(1-\|x\|_p^{m'})^{1/m'}yz)\|_p^{m}\right\}\,.
\]
For every $|z|\leq 1$, Clarkson's inequality \eqref{Clarkson-gen} for $\lambda=m'\leq\min\{p, p'\}$ implies
\begin{eqnarray*}
\lefteqn{\|x+(1-\|x\|_p^{m'})^{1/m'}yz)\|_p^{m}+\|x-(1-\|x\|_p^{m'})^{1/m'}yz)\|_p^{m}}\\
& &\leq 2\left\{\|x\|_p^{m'}+\|(1-\|x\|_p^{m'})^{1/m'}yz)\|_p^{m'}\right\}^{m/m'}\\
& &\leq 2\left\{\|x\|_p^{m'}+(1-\|x\|_p^{m'})\right\}^{m/m'}=2\,.
\end{eqnarray*}
Hence, $|q(z)|\leq 2\|\Lh\|$, for every $|z|\leq 1$ and by the Cauchy estimates $|q^{(k)}(0)|\leq k!\cdot 2\|\Lh\|$. Since $q^{(k)}(0)=2(1-\|x\|_p^{m'})^{k/m'}\Dh^{k}\Lh(x)y$, we have
\[
\|\Dh^{k}\Lh(x)\|\leq\frac{k!}{(1-\|x\|_p^{m'})^{k/m'}}\|\Lh\|
\]
which is the second estimate in \eqref{bern-ineq-1}.
\end{proof}

\begin{remark}
Harris \cite[Theorem $10$]{Harris1} has proved a Bernstein-type inequality for a holomorphic function $h$ satisfying certain conditions on a {\em complex} $L_{p}(\mu)$ space, $1\leq p\leq\infty$. In particular, if $h$ is a homogeneous polynomial $\Lh$ of degree $m=2k$ on some $L_{p}(\mu)$ space, he gives an upper bound for the norm $\|\Dh^{k}\Lh(x)\|$, for all $x\in L_{p}(\mu)$, $\|x\|_{p}\leq 1/2$.
\end{remark}

\begin{proposition}\label{complexmarkov-ineq}
Let $\Lh: L_{p}(\mu)\rightarrow \mathbb{C}$ be a continuous $m$-homogeneous polynomial, $m\geq 2$, on the {\it complex} $L_{p}(\mu)$ space. If $m'$ and $p'$ are the conjugate exponents of $m$ and $p$ respectively, for $k\leq m$ we have the following Markov-type inequality
\begin{equation}\label{markov-ineq-complex}
\|\Dh^{k}\Lh\|\leq C_{k, m}\|\Lh\|\,,
\end{equation}
where
\begin{equation}\label{markov-ineq-1}
C_{k, m}= \begin{cases} \frac{k!m^{m/p}}{(m-k)^{(m-k)/p}k^{k/p}} &\text{$1\leq p\leq m'$ ,}\\
\frac{k!m^{m/m'}}{(m-k)^{(m-k)/m'}k^{k/m'}} &\text{$m'\leq p\leq m$ ,}\\
\frac{k!m^{m/p'}}{(m-k)^{(m-k)/p'}k^{k/p'}} &\text{$m\leq p\leq\infty$ .}
          \end{cases}
\end{equation}
In the case $1\leq p\leq m'$ the estimate is best possible.
\end{proposition}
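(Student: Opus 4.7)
The plan is to bootstrap the Markov-type estimate from the Bernstein-type estimate of Theorem~\ref{bern-ineq} via a radial scaling argument. From the homogeneity identity \eqref{k-homog-differential}, for every $r \geq 0$ and $y \in L_p(\mu)$,
\begin{equation*}
\Dh^{k}\Lh(rx)\,y \;=\; k!\binom{m}{k}\, L\bigl((rx)^{m-k}, y^{k}\bigr) \;=\; r^{m-k}\,\Dh^{k}\Lh(x)\,y\,,
\end{equation*}
so $\|\Dh^{k}\Lh(rx)\| = r^{m-k}\|\Dh^{k}\Lh(x)\|$. In particular $x \mapsto \|\Dh^{k}\Lh(x)\|$ is $(m-k)$-homogeneous and hence attains its supremum over $B_{L_{p}(\mu)}$ on the unit sphere, so it suffices to prove \eqref{markov-ineq-complex} for $\|x\|_{p} = 1$.

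Fix such an $x$ and any $r \in (0,1)$. Since $\|rx\|_{p} = r < 1$, Theorem~\ref{bern-ineq} applies to the point $rx$ and, denoting by $\la$ the exponent $p$, $m'$, or $p'$ relevant to the range of $p$ under consideration, yields
\begin{equation*}
r^{m-k}\,\|\Dh^{k}\Lh(x)\| \;=\; \|\Dh^{k}\Lh(rx)\| \;\leq\; \frac{k!}{(1-r^{\la})^{k/\la}}\,\|\Lh\|\,.
\end{equation*}
Rearranging, $\|\Dh^{k}\Lh(x)\| \leq k!\,\|\Lh\|/f(r)$ where $f(r) := r^{m-k}(1-r^{\la})^{k/\la}$. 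Logarithmic differentiation produces a unique interior critical point at $r^{\la} = (m-k)/m$, and substituting back gives
\begin{equation*}
\max_{r \in (0,1)} f(r) \;=\; \frac{(m-k)^{(m-k)/\la}\, k^{k/\la}}{m^{m/\la}}\,,
\end{equation*}
so $\|\Dh^{k}\Lh(x)\| \leq \frac{k!\, m^{m/\la}}{(m-k)^{(m-k)/\la}\, k^{k/\la}}\|\Lh\|$, which is precisely \eqref{markov-ineq-1} in each of the three cases.

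The remaining point is sharpness in the range $1 \leq p \leq m'$, where $\la = p$. For this I would seek an $m$-homogeneous polynomial that simultaneously saturates Clarkson's inequality \eqref{Clarkson-gen} with $\la = p$ (used in the proof of Theorem~\ref{bern-ineq}) and, at the optimal dilation $r = ((m-k)/m)^{1/p}$, reduces the auxiliary one-variable polynomial $q(z)$ constructed in that proof to one for which Cauchy's estimate at $z = 0$ is attained. I expect this step to require the most care, but it is a concrete verification rather than a new idea; the main inequality itself is an essentially routine consequence of the Bernstein bound combined with one-variable optimization.
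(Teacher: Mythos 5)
Your derivation of the inequality itself is exactly the paper's argument: apply Theorem~\ref{bern-ineq} at the point $rx$, use the $(m-k)$-homogeneity of $x\mapsto\|\Dh^k\Lh(x)\|$ to rewrite the left side as $r^{m-k}\|\Dh^k\Lh(x)\|$, and then optimize $r^{m-k}(1-r^{\la})^{k/\la}$ over $r\in(0,1)$ with the critical point $r^{\la}=(m-k)/m$. That part is correct and matches the paper line for line.

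The sharpness claim for $1\leq p\leq m'$ is, however, left as a sketch, and the strategy you outline is more convoluted than what is actually needed. You propose to find a polynomial that simultaneously saturates Clarkson's inequality and the Cauchy estimate inside the proof of Theorem~\ref{bern-ineq} at the optimal dilation; but since the Markov bound is obtained by a further infimum over $r$, equality in the final estimate does not require every intermediate step to be tight, and hunting for such simultaneous saturation is a detour. The paper instead closes the claim with a direct computation (its Example~\ref{markov-eq}): take $\Lh(u)=u_1\cdots u_m$ on $\ell_p$, with associated symmetric $m$-linear form $L(x_1,\ldots,x_m)=\frac{1}{m!}\sum_{\sigma\in S_m}x_{1\sigma(1)}\cdots x_{m\sigma(m)}$. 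The arithmetic--geometric mean inequality gives $\|\Lh\|=m^{-m/p}$, and for the unit vectors $x=(m-k)^{-1/p}(e_1+\cdots+e_{m-k})$ and $y=k^{-1/p}(e_{m-k+1}+\cdots+e_m)$, identity~\eqref{k-homog-differential} yields
\[
|\Dh^{k}\Lh(x)y|=k!\binom{m}{k}\,|L(x^{m-k}y^{k})|=\frac{k!\,m^{m/p}}{(m-k)^{(m-k)/p}k^{k/p}}\|\Lh\|\,,
\]
which is exactly $C_{k,m}\|\Lh\|$. You should replace your search plan with this concrete verification; without it the sharpness assertion is unproved.
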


\begin{proof}
Consider the case $1\leq p\leq m'$ or $m\leq p\leq\infty$. If $\la=\min\{p, p'\}$ and $x\in L_{p}(\mu)$, $\|x\|_{p}<1$, from the previous theorem
\[
\|\Dh^{k}\Lh(x)\|\leq\frac{k!}{(1-\|x\|_p^{\la})^{k/\la}}\|\Lh\|
\]
and so for $\|x\|_{p}\leq 1$ and $0<r<1$ we have
\[
r^{m-k}\|\Dh^{k}\Lh(x)\|=\|\Dh^{k}\Lh(rx)\|\leq\frac{k!}{(1-r^{\la})^{k/\la}}\|\Lh\|\,.
\]
Therefore,
\[
\|\Dh^{k}\Lh(x)\|\leq\frac{k!}{r^{m-k}(1-r^{\la})^{k/\la}}\|\Lh\|\,,\quad\text{for $\|x\|_{p}\leq 1$ and $0<r<1$}\,.
\]
Observe that
\[
\min_{0<r<1} \frac{1}{r^{m-k}(1-r^{\la})^{k/\la}}=\frac{m^{m/\la}}{(m-k)^{(m-k)/\la}k^{k/\la}}
\]
and the minimum is attained for $r=\left(\frac{m-k}{m}\right)^{1/\la}$. Hence,
\[
\|\Dh^{k}\Lh\|=\sup_{\|x\|_{p}\leq 1} \|\Dh^{k}\Lh(x)\| \leq\frac{k!m^{m/\la}}{(m-k)^{(m-k)/\la}k^{k/\la}}\|\Lh\|\,.
\]
Similar is the proof of the middle estimate in \eqref{markov-ineq-1}. Sharpness in the case $1\leq p\leq m'$ will follow from the next Example \ref{markov-eq}.
\end{proof}

Observe that in the case $1\leq p\leq m'$ the first inequality in \eqref{markov-ineq-1} also follows from a special case of \cite[Theorem 1]{Sarantopoulos1}.

\begin{example}\label{markov-eq}
Consider the symmetric $m$-linear form $L$ on the space of $p$-summable sequences $\ell_{p}$ given by
\[
L(x_1, \ldots, x_m)=\frac{1}{m!}\sum_{\sigma\in S_m} x_{1\sigma(1)}\cdots x_{m\sigma(m)}\,,
\]
where $x_i=(x_{in})_{n=1}^{\infty}$, $i=1, \ldots, m$, and $S_m$ is the set of permutations of the first $m$ natural numbers. Then, $\Lh(u)=u_{1}\cdots u_{m}$, $u=(u_{i})$, is the $m$-homogeneous polynomial associated to $L$. If $(e_i)$ is the standard unit vector basis of $\ell_{p}$, for the unit vectors
\[
x=\frac{1}{(m-k)^{1/p}}(e_1+\cdots +e_{m-k})\quad\text{and}\quad y=\frac{1}{k^{1/p}}(e_{m-k+1}+\cdots +e_{m})
\]
in $\ell_{p}$ we can easily verify (see \cite[\emph{Example 1}]{Sarantopoulos1}) that
\[
|L(x^{m-k} y^{k})|=\frac{(m-k)!k!}{(m-k)^{(m-k)/p}k^{k/p}}\cdot\frac{m^{m/p}}{m!}\|\Lh\|\,.
\]
Observe that
\[
|\Lh(u)|=\{|u_{1}|^{p}\cdots|u_{m}|^{p}\}^{1/p}\leq\left\{\frac{|u_{1}|^{p}+\cdots +|u_{m}|^{p}}{m}\right\}^{m/p}
\]
by the arithmetic-geometric mean inequality and so $\|\Lh\|\leq 1/m^{m/p}$. In fact $\|\Lh\|=1/m^{m/p}$ since for the unit vector $v=(v_{i})$ in $\ell_{p}$, with $v_{i}=m^{-1/p}$ for $1\leq i\leq m$ and $v_{i}=0$ for $i>m$, $|\Lh(v)|=1/m^{m/p}$.  Therefore, identity \eqref{k-homog-differential} implies
\[
|\Dh^{k}\Lh(x)y|=k!{m\choose k} |L(x^{m-k} y^k)|=\frac{k!m^{m/p}}{(m-k)^{(m-k)/p}k^{k/p}}\|\Lh\|\,.
\]
\end{example}

Now we give Markov-type estimates in the case of \emph{real} $L_{p}(\mu)$ spaces. For this we need some results related to complexification of \emph{real} Banach spaces, polynomials and multilinear maps,
see \cite{Gustavo1}.

A \emph{complex} vector space $\widetilde{X}$ is a \emph{complexification} of
a \emph{real} vector space $X$ if the following two conditions hold:
\begin{itemize}
\item[(i)] there is a one-to-one real-linear map $j: X\rightarrow \widetilde{X}$ and

\item[(ii)] complex-span$\big(j(X)\big)=\widetilde{X}$.
\end{itemize}
If $X$ is a real vector space, we can make $X\times X$ into a complex vector space by
defining
\begin{align*}
(x,y)+(u,v) &:= (x+u, y+v)\quad\forall x,y,u,v\in X\,, \\
(\alpha+i\beta)(x, y) &:= (\alpha x-\beta y, \beta x+\alpha y)\quad\forall x,y\in X,\quad\forall \alpha, \beta \in\mathbb{R}.
\end{align*}
The map $j: X\rightarrow X\times X$; $x\mapsto (x, 0)$ clearly satisfies conditions (i) and (ii) above, and so this complex vector space is a complexification of $X$. It is convenient to denote it by
\[
\widetilde{X}=X\oplus iX\,.
\]
If $X$ is a real-valued $L_{p}(\mu)$-space, the complexification procedure yields the corresponding complex-valued space. Since $X=L_{p}(\mu)$ is actually a Banach lattice, the norm on $\widetilde{X}$
can be specified by
\[
\|(x, y)\|=\|(|x|^2+|y|^2)^{1/2}\|\,, \qquad \forall x,y\in X.
\]
Bochnak and Siciak (see \cite[Theorem 3]{BS}) observed that when $X$ is a real Banach space, each $L\in\mathcal{L}(^{m}X; \mathbb{R})$ has a unique complex extension $\widetilde{L}\in\mathcal{L} (^{m}\widetilde{X}; \mathbb{C})$, defined by the formula
\[
\widetilde{L}(x_{1}^{0}+ ix_{1}^{1},\ldots ,x_{m}^{0}+ix_{m}^{1})=\sum i^{\sum_{j=1}^{m}{\epsilon}_{j}}
L(x_{1}^{{\epsilon}_{1}},\ldots ,x_{m}^{{\epsilon}_{m}}),
\]
where $x_{k}^{0}, x_{k}^{1}$ are vectors in $X$, and the summation is extended over the $2^m$ independent choices of $\epsilon_{k}=0, 1\ (1\leq k\leq m)$. The norm of $\widetilde{L}$ depends on the norm used on $\widetilde{X}$, but continuity is always assured.

In the context of polynomials (see also \cite[p.313]{Taylor}), any $P\in\mathcal{P}(^{m}X; \mathbb{R})$ has a unique complex extension $\widetilde{P}\in\mathcal{P}(^{m}\widetilde{X}; \mathbb{C})$, given by the formula
\begin{equation*}
\widetilde{P}(x+iy)=\sum_{k=0}^{[\frac{m}{2}]}(-1)^{k}{\binom {m}{2k}}L(x^{m-2k}y^{2k})
+i\sum_{k=0}^{[\frac{m-1}{2}]}(-1)^{k}{\binom {m}{2k+1}}L(x^{m-(2k+1)}y^{2k+1})
\end{equation*}
for $x, y$ in $X$, where $P:=\Lh$ for some $L\in\mathcal{L}^{s}(^{m}X; \mathbb{R})$. Here also $\widetilde{P}=\widehat{\widetilde{L}}$.

If $\widetilde{X}$ is the complexification of a real Banach space $X$, each $L\in\mathcal{L}^{s}(^{m}X; \mathbb{R})$ has a unique complex extension $\widetilde{L}\in\mathcal{L}^{s}(^{m}\widetilde{X}; \mathbb{C})$ with $\|L\|\leq \|\widetilde{L}\|$ and $\|P\|\leq \|\widetilde{P}\|$, where $P=\Lh$. We also have \cite[Proposition 18]{Gustavo1}
\begin{equation}\label{realvscomplex}
\|\widetilde{P}\|\leq 2^{m-1}\|P\|\quad\text{and}\quad\|\widetilde{L}\|\leq 2^{m-1}\|L\|\,.
\end{equation}

\begin{proposition}\label{realmarkov-ineq}
Let $\Lh: L_{p}(\mu)\rightarrow \mathbb{R}$ be a continuous $m$-homogeneous polynomial, $m\geq 2$, on the \emph{real} $L_{p}(\mu)$ space. Then, for $k\leq m$ we have the following Markov-type inequality
\begin{equation}\label{markov-ineq-real}
\|\Dh^{k}\Lh\|\leq 2^{m-1}C_{k, m}\|\Lh\|\,,
\end{equation}
where $C_{k, m}$ are the estimates in \eqref{markov-ineq-1}.
\end{proposition}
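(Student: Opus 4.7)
The plan is to reduce the real case to the complex Markov inequality (Proposition \ref{complexmarkov-ineq}) via the Bochnak--Siciak complexification procedure described just before the proposition. Given a real $m$-homogeneous polynomial $\Lh = \widehat{L}$ on the real $L_p(\mu)$, I would first extend it to its canonical complex companion $\widetilde{\Lh} = \widehat{\widetilde{L}} \in \mathcal{P}(^m \widetilde{L_p(\mu)};\mathbb{C})$. Since the lattice norm $\|(x,y)\| = \|(|x|^{2}+|y|^{2})^{1/2}\|_{p}$ on the complexification coincides with the standard complex $L_{p}(\mu)$ norm, Proposition \ref{complexmarkov-ineq} applies directly to $\widetilde{\Lh}$, yielding $\|\Dh^{k}\widetilde{\Lh}\| \leq C_{k,m}\,\|\widetilde{\Lh}\|$. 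Combined with the estimate $\|\widetilde{\Lh}\| \leq 2^{m-1}\|\Lh\|$ from \eqref{realvscomplex}, this already supplies the right-hand side $2^{m-1}C_{k,m}\|\Lh\|$; the remaining task is to dominate $\|\Dh^{k}\Lh\|$ by $\|\Dh^{k}\widetilde{\Lh}\|$.

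The key identification is that the $k$th derivative of the extension, restricted to real arguments, is precisely the $k$th derivative of the original polynomial. Indeed, expanding $\widetilde{\Lh}(x+zw) = \widetilde{L}((x+zw)^{m})$ for $x,w \in L_p(\mu)$ real and $z\in\mathbb{C}$ via the symmetric multilinear form, and using that $\widetilde{L}$ agrees with $L$ on real tuples, I get
\[
\Dh^{k}\widetilde{\Lh}(x)\,w \;=\; k!\binom{m}{k}\,\widetilde{L}(x^{m-k},w^{k}) \;=\; k!\binom{m}{k}\,L(x^{m-k},w^{k}) \;=\; \Dh^{k}\Lh(x)\,w
\]
for every $x,w$ in the real space. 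Because the embedding $x\mapsto(x,0)$ sends $B_{L_p(\mu)}$ isometrically into $B_{\widetilde{L_p(\mu)}}$ and the values $\Dh^{k}\Lh(x)w$ are real for real $x,w$, taking the supremum over real $w$ in the real unit ball and then over real $x$ gives the pointwise-to-global comparison $\|\Dh^{k}\Lh\| \leq \|\Dh^{k}\widetilde{\Lh}\|$. Chaining the three inequalities then yields \eqref{markov-ineq-real}.

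I do not expect a genuine obstacle: every ingredient (the extension formula, the norm bound $\|\widetilde{\Lh}\|\leq 2^{m-1}\|\Lh\|$, and the complex Markov estimate) has already been set up in the excerpt. The only point that deserves a brief verification in the write-up is the compatibility between the abstract complexification norm $\|(x,y)\| = \|(|x|^2+|y|^2)^{1/2}\|_p$ used in the framework of \cite{Gustavo1} and the ordinary complex $L_p(\mu)$ norm to which Proposition \ref{complexmarkov-ineq} is tailored—so that the same constants $C_{k,m}$ transfer without modification.
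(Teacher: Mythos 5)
Your proposal is correct and follows essentially the same route as the paper: complexify $\Lh$ to $\widetilde{\Lh}$, apply Proposition~\ref{complexmarkov-ineq} to the complex extension, and invoke the bound $\|\widetilde{\Lh}\|\leq 2^{m-1}\|\Lh\|$ from \eqref{realvscomplex}. The paper simply asserts $\|\Dh^{k}\Lh\|\leq\|\Dh^{k}\widetilde{\Lh}\|$ without comment, whereas you justify it by identifying $\Dh^{k}\widetilde{\Lh}(x)w=\Dh^{k}\Lh(x)w$ on real arguments and noting that the complexification norm on $\widetilde{L_p(\mu)}$ coincides with the usual complex $L_p$ norm — both points that the paper's proof leaves implicit, so your version is slightly more self-contained but not a different argument.
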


\begin{proof}
Let $P=\Lh\in\mathcal{P}(^{m}L_{p}(\mu); \mathbb{R})$. If $\widetilde{P}\in\mathcal{P}(^{m}L_{p}(\mu); \mathbb{C})$ is the unique extension of $P$ on the complex $L_{p}(\mu)$-space, it follows from \eqref{markov-ineq-complex} that
\[
\|\Dh^{k}\widetilde{P}\|\leq C_{k, m}\|\widetilde{P}\|\,.
\]
If we use the first inequality in \eqref{realvscomplex}, we have
\[
\|\Dh^{k}\Lh\|=\|\Dh^{k}P\|\leq\|\Dh^{k}\widetilde{P}\|\leq 2^{m-1}C_{k, m}\|\Lh\|\,.
\]
\end{proof}
The estimate in \eqref{markov-ineq-real} is far from optimal.

\subsection{\bf An application: Polarization constants of $L_{p}(\mu)$ spaces.}

Let $L\in{\mathcal L}^{s}(^{m} L_{p}(\mu))$. Consider first the case $1\leq p\leq m'$. Using formula \eqref{1-differential}, from inequality \eqref{markov-ineq-complex} and the estimate in \eqref{markov-ineq-1} with $k=1$ of Proposition \ref{complexmarkov-ineq} we have
\[
|L(x^{m-1} y)|=\frac{1}{m}|D\Lh(x)y|\leq\frac{m^{m/p-1}}{(m-1)^{(m-1)/p}}\|\Lh\|\,.
\]
Now an induction on $m$ implies that
\[
\|L\|\leq\frac{m^{m/p}}{m!}\|\Lh\|\,,\quad\text{for every $L\in{\mathcal L}^{s}(^{m} L_{p}(\mu))$, $1\leq p\leq m'$}
\]
and so $\mathbb{C}(m, p)\leq m^{m/p}/m!$. If $m\leq p\leq\infty$, using the estimate in \eqref{markov-ineq-1} a similar argument shows that $\mathbb{C}(m, p)\leq m^{m/p'}/m!$. Finally, we consider the case $L\in{\mathcal L}^{s}(^{m} L_{p}(\mu))$, $m'\leq p\leq m$. Using the estimate in \eqref{markov-ineq-1}, an induction on $m$ gives
\[
\|L\|\leq\frac{m^{m/m'}}{m!}\|\Lh\|=\frac{m^{(m-1)}}{m!}\|\Lh\|\,.
\]
Notice that for the induction argument in the case $m'\leq p\leq 2$ we need to consider $m'\leq p\leq (m-1)'$ and $(m-1)'\leq p\leq 2$, while in the case $2\leq p\leq m$ we need to consider $2\leq p\leq m-1$ and $m-1\leq p\leq m$, $m\geq 3$. We have proved the following result.

\begin{proposition}\label{polar-constants}
For the $m$th polarization constant $\mathbb{C}(m, p)$, $m\geq 2$, we have the estimates
\begin{equation}\label{polar-constants-1}
\mathbb{C}(m, p)\leq \begin{cases} \frac{m^{m/p}}{m!} &\text{$1\leq p\leq m'$ ,}\\
\frac{m^{m/m'}}{m!} &\text{$m'\leq p\leq m$ ,}\\
\frac{m^{m/p'}}{m!} &\text{$m\leq p\leq\infty$ .}
                                \end{cases}
\end{equation}
\end{proposition}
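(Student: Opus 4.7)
The idea is to chain the $k=1$ Markov-type estimate of Proposition \ref{complexmarkov-ineq} with the identity \eqref{1-differential} and iterate by induction on $m$. Writing $\kappa_m$ for the proposed upper bound on $\mathbb{C}(m,p)$ in whichever regime of \eqref{polar-constants-1} applies, \eqref{1-differential} gives, for unit vectors $x,y \in L_p(\mu)$,
\[
|L(x^{m-1},y)| \;=\; \tfrac{1}{m}|D\widehat{L}(x)y| \;\leq\; \tfrac{C_{1,m}}{m}\|\widehat{L}\|,
\]
where $C_{1,m}$ is the $k=1$ entry of \eqref{markov-ineq-1} in the regime of $p$. The base case $m=1$ is trivial since $\widehat{L}=L$. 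For the inductive step, fix a unit $y$ and let $M(z_1,\ldots,z_{m-1}):=L(z_1,\ldots,z_{m-1},y)$, a symmetric $(m-1)$-linear form whose associated homogeneous polynomial is $\widehat{M}(z)=L(z^{m-1},y)$ (by symmetry of $L$). The inductive hypothesis, applied in the appropriate branch of \eqref{polar-constants-1} at level $m-1$, gives $\|M\|\leq\kappa_{m-1}\|\widehat{M}\|$; taking the supremum over $y$ then yields $\|L\|\leq \kappa_{m-1}C_{1,m}/m\cdot\|\widehat{L}\|$, so it suffices to check $\kappa_{m-1}C_{1,m}/m \leq \kappa_m$ in each regime.

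For the outer regimes this is direct substitution. When $1\leq p\leq m'$, one has $m'<(m-1)'$ for $m\geq 2$, hence $p\leq(m-1)'$, so $\kappa_{m-1}=(m-1)^{(m-1)/p}/(m-1)!$ from the \emph{first} case at level $m-1$; plugging in $C_{1,m}=m^{m/p}/(m-1)^{(m-1)/p}$ and dividing by $m$ gives exactly $m^{m/p}/m!=\kappa_m$. The third regime $m\leq p\leq\infty$ is identical under the involution $p\leftrightarrow p'$, invoking the third case at level $m-1$.

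The delicate point, and the principal obstacle, is the middle regime $m'\leq p\leq m$, where the exponent $m/m'=m-1$ in $\kappa_m$ is $p$-independent but the natural domain $[(m-1)',m-1]$ of the middle-case inductive hypothesis at level $m-1$ is strictly contained in $[m',m]$. Consequently, for $p\in[m',(m-1)')$ the inductive hypothesis must be taken from the \emph{first} case of \eqref{polar-constants-1} at level $m-1$, and for $p\in(m-1,m]$ from the \emph{third} case; this is precisely the splitting around $(m-1)'$ and $m-1$ (refined further at $p=2$ when $m\geq 3$) flagged in the paragraph preceding the proposition. In each sub-range the required inequality $\kappa_{m-1}C_{1,m}/m\leq m^{m-1}/m!$ reduces, after cancellation of powers of $(m-1)$ and $m$, to the elementary bound $(m-1)^2\geq m(m-2)$, which holds trivially. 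No new idea is required beyond this careful bookkeeping of which branch of the inductive hypothesis applies in which sub-interval of $p$.
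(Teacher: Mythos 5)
Your proof is correct and follows essentially the same route as the paper's: both iterate the $k=1$ Markov estimate of Proposition~\ref{complexmarkov-ineq} through identity~\eqref{1-differential} and induct on $m$, with the same splitting of $[m',m]$ around $(m-1)'$ and $m-1$ that the paper flags in the sentence preceding the proposition. The explicit passage to $M(z_1,\ldots,z_{m-1}):=L(z_1,\ldots,z_{m-1},y)$ and the check $\kappa_{m-1}C_{1,m}/m\leq\kappa_m$ in each sub-range are a correct unpacking of what the paper leaves implicit.
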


Using the polarization formula \eqref{polarformula} and inequality \eqref{WW-ineq} we can show that the estimates in \eqref{polar-constants-1} hold for \emph{complex} as well as for \emph{real} $L_{p}(\mu)$ spaces.

\begin{proposition}\label{gpolar-constants}
For the $m$th polarization constant $\mathbb{K}(m, p)$, $m\geq 2$, we have the estimates
\begin{equation}\label{gpolar-constants-1}
\mathbb{K}(m, p)\leq \begin{cases} \frac{m^{m/p}}{m!} &\text{$1\leq p\leq m'$ ,}\\
\frac{m^{m/m'}}{m!} &\text{$m'\leq p\leq m$ ,}\\
\frac{m^{m/p'}}{m!} &\text{$m\leq p\leq\infty$ .}
                                \end{cases}
\end{equation}
In the case $1\leq p\leq m'$ the estimate is best possible.
\end{proposition}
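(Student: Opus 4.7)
The plan is to combine the polarization formula \eqref{polarformula} with the $L_p$-inequality \eqref{WW-ineq} and a Jensen-type interpolation step. Let $L\in\mathcal{L}^{s}({}^{m}L_{p}(\mu))$ and take $x_1,\ldots,x_m$ in the closed unit ball of $L_p(\mu)$. Writing $g(t):= \bigl\|\sum_{n=1}^{m} r_n(t)x_n\bigr\|_p$, formula \eqref{polarformula} together with the $m$-homogeneity of $\Lh$ gives
\[
|L(x_1,\ldots,x_m)|\leq \frac{\|\Lh\|}{m!}\int_0^1 g(t)^m\, dt.
\]
The whole proof reduces to controlling this integral. Set $\la:= \min\{p, p', m'\}$; then $\la\leq m'$ forces $\la'\geq m$, so $s\mapsto s^{m/\la'}$ is concave and Jensen's inequality on the probability space $\bigl([0,1], dt\bigr)$ yields
\[
\int_0^1 g(t)^m\,dt\leq \bigg(\int_0^1 g(t)^{\la'}\,dt\bigg)^{m/\la'}.
\]
Since simultaneously $\la\leq\min\{p, p'\}$, inequality \eqref{WW-ineq} applies and gives $\bigl(\int_0^1 g(t)^{\la'}\,dt\bigr)^{1/\la'}\leq \bigl(\sum_{i=1}^{m}\|x_i\|_p^{\la}\bigr)^{1/\la}\leq m^{1/\la}$. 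Chaining these together yields $|L(x_1,\ldots,x_m)|\leq (m^{m/\la}/m!)\|\Lh\|$, whence $\mathbb{K}(m, L_p(\mu))\leq m^{m/\la}/m!$, and passing to the supremum over $\mu$ bounds $\mathbb{K}(m, p)$ by the same constant.

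It then remains to read off $\la$ in each regime. If $1\leq p\leq m'$, then $p\leq m'\leq 2\leq m\leq p'$ so $\la=p$; if $m'\leq p\leq m$, then both $p$ and $p'$ lie in $[m', m]$, forcing $\la=m'$; if $m\leq p\leq\infty$, then $p'\leq m'\leq m\leq p$, so $\la=p'$. These three regimes reproduce exactly the three estimates in \eqref{gpolar-constants-1}. Because the polarization identity \eqref{polarformula} and the Rademacher-type inequality \eqref{WW-ineq} are both real-variable statements which are insensitive to the underlying scalar field, the same bounds hold on real and on complex $L_p(\mu)$ spaces, as required.

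For sharpness in the range $1\leq p\leq m'$ I would invoke Example \ref{markov-eq}. On $\ell_p$ (with counting measure, which is a legitimate $L_p(\mu)$) take the symmetric $m$-linear form $L(x_1,\ldots,x_m)=\frac{1}{m!}\sum_{\sigma\in S_m}x_{1\sigma(1)}\cdots x_{m\sigma(m)}$ and its associated polynomial $\Lh(u)=u_1\cdots u_m$. As shown in Example \ref{markov-eq}, the AM--GM inequality gives $\|\Lh\|=1/m^{m/p}$. Evaluating $L$ on the standard basis vectors $e_1,\ldots,e_m$, only the identity permutation contributes a nonzero product, so $L(e_1,\ldots,e_m)=1/m!$ and hence $\|L\|\geq 1/m!$. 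The ratio $\|L\|/\|\Lh\|\geq m^{m/p}/m!$ then matches the upper bound, proving $\mathbb{K}(m,p)=m^{m/p}/m!$ in that range. The main delicate point of the argument is the optimisation: $\la$ must satisfy \emph{both} $\la\leq\min\{p, p'\}$ (so that \eqref{WW-ineq} applies) \emph{and} $\la\leq m'$ (so that the Jensen step is valid), and the three regimes in \eqref{gpolar-constants-1} simply record which of these three constraints is binding.
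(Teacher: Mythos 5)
Your proof is correct and, for the two extreme regimes $1\leq p\leq m'$ and $m\leq p\leq\infty$, it follows essentially the same route as the paper: polarization formula, then a H\"older/Jensen interpolation step (your Jensen step with the concave map $s\mapsto s^{m/\la'}$ is precisely H\"older's inequality on the probability space $[0,1]$), then the Rademacher $L_p$-inequality \eqref{WW-ineq}, and finally Example~\ref{markov-eq} for sharpness when $1\leq p\leq m'$.

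The genuine difference is in the middle regime $m'\leq p\leq m$. The paper does not handle this case directly from \eqref{polarformula} and \eqref{WW-ineq}; instead it invokes an external monotonicity result from \cite{Chatz-Sa}, namely that $p\mapsto\mathbb{K}(m,p)$ is decreasing on $[1,2]$ and increasing on $[2,\infty)$, and then sandwiches $\mathbb{K}(m,p)\leq\mathbb{K}(m,m')=m^{m/m'}/m!$ for $p\in[m',m]$. Your argument instead unifies all three ranges under a single choice $\la=\min\{p,p',m'\}$: in the middle range this gives $\la=m'$, $\la'=m$, so your Jensen step degenerates into an equality and \eqref{WW-ineq} with $\la=m'$ (which applies because $m'\leq\min\{p,p'\}$ throughout $[m',m]$) delivers the bound directly. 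This is self-contained, avoids the citation to the preprint \cite{Chatz-Sa}, and puts the constraint analysis (the two requirements $\la\leq\min\{p,p'\}$ and $\la\leq m'$) front and center, which also makes it immediately transparent why the three regimes in \eqref{gpolar-constants-1} arise. One small point worth making explicit, which you leave implicit: the degenerate middle case is exactly where the Jensen/H\"older factor contributes nothing, and this is what explains why the bound plateaus at $m^{m/m'}/m!$ across the whole interval $[m',m]$ instead of continuing to improve.
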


\begin{proof}
Let $x_{i}\in L_{p}(\mu)$, $1\leq i\leq m$, be unit vectors. From the polarization formula \eqref{polarformula} we have
\[
|L(x_1, \ldots, x_m)|\leq\frac{\|\Lh\|}{m!}\int_{0}^{1} \big\|\sum_{i=1}^{m} r_{i}(t)x_{i}\big\|_{p}^{m}\, dt\,.
\]
Since $1\leq p\leq m'\Leftrightarrow m\leq p'\leq \infty$, using H\"older's inequality first and then
inequality \eqref{WW-ineq}, the previous inequality gives the first estimate in \eqref{gpolar-constants-1}
(we also refer to the proof of Theorem $2$ in \cite{Sarantopoulos1}). The proof of the third estimate in \eqref{gpolar-constants-1} is similar. In particular, for $p=m$ we have
\[
\mathbb{K}(m, m)\leq\mathbb{K}(m, m')=\frac{m^{m/m'}}{m!}=\frac{m^{m-1}}{m!}\,.
\]
Finally, consider the case $m'\leq p\leq m$. Since $\mathbb{K}(m, p)$, as a function of $p$, is decreasing on the interval $[1, 2]$ and increasing for $p\geq 2$ (see \cite{Chatz-Sa}), for every $p\in [m', m]$ we have $\mathbb{K}(m, p)\leq m^{m/m'}/m!$.

To see that the estimate $m^{m/p}/m!$ is best possible in the case $1\leq p\leq m'$, we consider $L\in\mathcal{L}^{s}({}^{m}\ell_{p}; \mathbb{K})$ defined in Example \ref{markov-eq}. We have
$\|L\|\geq L(e_{1}, \ldots, e_{m})=1/m!$ and $\|\Lh\|=1/m^{m/p}$. Since by \eqref{gpolar-constants-1} $\mathbb{K}(m, p)\leq m^{m/p}/m!$, we conclude that $\|L\|= L(e_{1}, \ldots, e_{m})=1/m!$ and
\[
\|L\|=\frac{m^{m/p}}{m!}\|\Lh\|\,.
\]
Thus, $\mathbb{K}(m, p)= m^{m/p}/m!$.
\end{proof}

{\bf ($i$) Special case $m=2$.}

From \eqref{gpolar-constants-1} and for $1\leq p\leq\infty$ we have the estimate
\[
\mathbb{K}(2, p)\leq 2^{|p-2|/p}\,.
\]
For $1\leq p\leq 2$ we have $\mathbb{K}(2, p)=2^{(2-p)/p}$. In fact, in the case $1\leq p\leq m'$ it follows from Proposition \ref{gpolar-constants} that $\mathbb{K}(m, p)=m^{m/p}/m!$, for every $m\geq 2$. Therefore,
for $m=2$ and for $1\leq p\leq 2$ we have $\mathbb{K}(2, p)=2^{2/p}/2!=2^{(2-p)/p}$. To prove equality, as in Example \ref{markov-eq} we consider the $2$-homogeneous polynomial $\Lh(x)=x_{1}x_{2}$ with
$L(x, y)=\frac{1}{2}(x_{1}y_{2}+x_{2}y_{1})$, $x=(x_i)$, $y=(y_i)$, the corresponding symmetric bilinear form on the real or complex $\ell_{p}$ space, $1\leq p\leq 2$. Since $\|L\|\geq |L(e_{1}, e_{2})=1/2$ and for $x=(2^{-1/p}, 2^{-1/p}, 0, \ldots)$ with $\|x\|_{p}=1$, $\|\Lh\|=|\Lh(x)|=2^{-2/p}$, we have $\|L\|\geq 2^{(2-p)/p}\|\Lh\|$. But $\mathbb{K}(2, p)\leq 2^{(2-p)/p}$ and so $\|L\|=2^{(2-p)/p}\|\Lh\|$.

For $2\leq p\leq\infty$ we have $\mathbb{R}(2, p)=2^{(p-2)/p}$ and in particular $\mathbb{R}(2, \infty)=2$. To see this consider the $2$-homogeneous polynomial $\Lh(x)=x_{1}^{2}-x_{2}^{2}$ with
$L(x, y)=x_{1}y_{1}-x_{2}y_{2}$, $x=(x_i)$, $y=(y_i)$, the corresponding symmetric bilinear form on the \emph{real} $\ell_{p}$ space. Obviously $\|\Lh\|=|\Lh(e_{1})|=1$. On the other hand, for
$x=(2^{-1/p}, 2^{-1/p}, 0, \ldots)$ and $y=(2^{-1/p}, -2^{-1/p}, 0, \ldots)$ we have $\|x\|_{p}=\|y\|_{p}=1$ and $L(x, y)=2^{1-2/p}$. Hence, $\|L\|=2^{(p-2)/p}\|\Lh\|$.

{\bf ($ii$) Case $p\geq m\geq 3$.}

In this case the constant $\frac{m^{m/p'}}{m!}$ in \eqref{gpolar-constants-1} can be improved. It has been shown in \cite{Chatz-Sa} that for $p\geq m\geq 3$,
\begin{equation}\label{polar-1}
\mathbb{K}(m, p)\leq\frac{(2m)^{m/2}}{m!}\left(\frac{\Gamma\left(\frac{1}{2}(p+1)\right)}{\sqrt{\pi}}\right)^{m/p}\,,
\end{equation}
where $\Gamma$ is the \emph{gamma function}. For example, in the special case $p=m=4$ inequality \eqref{polar-1} gives
\[
\mathbb{K}(4, 4)\leq\frac{8^{2}}{4!}\cdot\frac{\Gamma(5/2)}{\sqrt{\pi}}=2\,.
\]
Since $p'=4/3$ is the conjugate exponent of $p=4$, from \eqref{gpolar-constants-1} we have the estimate $\mathbb{K}(4, 4)\leq\frac{4^3}{4!}=\frac{8}{3}$ which is bigger than $2$.

For the \emph{complex} $\ell_{\infty}$ Harris \cite[($16$)]{Harris1}, see also \cite[Proposition 1.43]{Dineen}, has shown that
\[
\mathbb{C}(m, \infty)=\mathbb{C}(m, \ell_{\infty})\leq\frac{m^{m/2}(m+1)^{(m+1)/2}}{2^{m}m!}
\]
and this upper estimate is smaller than $m^{m}/m!$. Tonge has also proved the same result by using a method very similar to the method which was used to prove that the {\em complex} Grothendieck constant $G(2)$ is bounded above by $\frac{3}{4}\sqrt{3}$, see \cite{Tonge1}.

\begin{remark}
Harris \cite[Theorem $6$]{Harris1} showed that if $1\leq p\leq\infty$ and $m$ is a power of $2$, then
\begin{equation}\label{Harris-est1}
\mathbb{C}(m, p)\leq\left(m^{m}/m!\right)^{|p-2|/p}\,.
\end{equation}
He has also conjectured that \eqref{Harris-est1} holds for all positive integers $m$ and that the constant given is best possible. But, as we have stated in Proposition \ref{gpolar-constants}, in the case $1\leq p\leq m'$, $m\geq 3$, the best constant is $\mathbb{C}(m, p)=m^{m/p}/m!$ and this is strictly less than $\left(m^{m}/m!\right)^{(2-p)/p}$. Observe that for $m=2$
\[
\mathbb{C}(2, p)=\frac{2^{2/p}}{2!}=2^{(2-p)/p}\,,\quad1\leq p\leq 2\,,
\]
and this is the constant given in \eqref{Harris-est1}.\\
On the other hand, for $m'\leq p\leq 2$, where $m=2^{n}$, $n\geq 2$, the constant given in \eqref{Harris-est1} has been improved in \cite[Theorem $3'$]{Sarantopoulos1}. But, in case $p$ is close to $2$, and for $m$ a power of $2$, Harris' bound is better than that of Proposition \ref{polar-constants}.
\end{remark}


\section{\bf Bernstein's inequality for homogeneous polynomials on Hilbert spaces}

A famous result, investigated by Banach \cite{Banach} and many other authors, for example \cite{BS,CS,Harris1,Hormander,Kellogg,PST}, asserts that if $H$ is a Hilbert space, then $\mathbb{K}(n, H)=1$. In other words, $\|L\|=\|\Lh\|$ for every $L\in\mathcal{L}^{s}(^{n} H)$. Recall that $L$ is a continuous symmetric $n$-linear form on a Hilbert space $H$ and $\widehat{L}$ is the associated continuous $n$-homogeneous polynomial. Since the Fr\'echet derivative of $\Lh$ at $x\in H$ is given by $D\Lh(x)(y)=n L(x^{n-1} y)$, $y\in H$, where $L(x^{n-1} y):= L(\underbrace{x, \ldots , x}_{n- 1}, y)$, to prove $\|L\|=\|\Lh\|$ by an inductive argument, it suffices to show that $|L(x^{n- 1} y)|\leq\|\widehat{L}\|$ for any unit vectors $x$ and $y$ in $H$. In other words, $\|L\|=\|\Lh\|$ for any $\Lh\in\mathcal{P}\left({}^{n} H\right)$ if and only if
\begin{equation}\label{Bernstein-1}
\|D\Lh\|\leq n\|\Lh\|\,,\qquad \forall\, \Lh\in\mathcal{P}\left({}^{n} H\right)\,.
\end{equation}
Banach proved this result for continuous symmetric $n$-linear forms and continuous $n$-homogeneous polynomials on {\em finite dimensional real} Hilbert spaces. The proof works equally well for real and complex Hilbert spaces, and the condition of finite dimensionality is only needed to ensure that the $n$-linear form attains its norm.  The result that $\|L\|=\|\Lh\|$ is true for all Hilbert spaces, and, as pointed out by Banach, can be obtained through a simple limit argument based on the finite dimensional case.

Clearly, if $\Lh$ attains its norm at $x_{0}\in B_{H}$, the closed unit ball of the Hilbert space $H$, then $L$ also attains its norm at $\left(x_{0}, \ldots, x_{0}\right)\in B_{H}^{n}$. When $H$ is finite dimensional, $L$ will always attain its norm, since the closed unit ball of $H$ is compact. However, when $H$ is infinite dimensional, $L$ need not attain its norm: if $H=\ell_{2}$, the space of square summable sequences, and $L(x, y)=\sum_{n=1}^{\infty} \frac{n}{n+1}x_{n}y_{n}$, it is easy to see that $\|L\|=1$, but that $|L(x, y)|< 1$ for all unit vectors $x=(x_{n})$ and $y=(y_{n})$ in $H$.

It is true, but not obvious, that if $L$ attains its norm at $\left(x_{1}, \ldots, x_{n}\right)\in B_{H}^{n}$, then $\widehat{L}$ also attains its norm at some $x_{0}\in B_{H}$. When $L$ does attain its norm, an explicit construction has been given in \cite[section $2$]{PST} to provide a unit vector $x_{0}$ with $\|\Lh\|=|\Lh(x_{0})|$.

\begin{theorem}\cite[Theorem 2.1]{PST}\label{Banach-2}
If $L$ is a norm attaining continuous symmetric $n$-linear form, $n\geq 2$, on a Hilbert space, then the associated continuous symmetric $n$-homogeneous polynomial $\Lh$ also attains its norm. Moreover, $\|L\|=\|\Lh\|$.
\end{theorem}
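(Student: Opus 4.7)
It suffices to produce $x_0\in B_H$ with $|\Lh(x_0)|=\|L\|$, since this forces $\|\Lh\|\geq\|L\|$ and the reverse inequality is automatic. After multiplying $L$ by a unit scalar one may assume $L(x_1,\ldots,x_n)=M:=\|L\|>0$, where $(x_1,\ldots,x_n)$ is the given norm-attaining tuple; norm-attainment forces $\|x_i\|=1$ for every $i$, so all the $x_i$ lie in the finite-dimensional subspace $V:=\mathrm{span}\{x_1,\ldots,x_n\}\subset H$. Since $B_V$ is compact and contains the maximizing tuple of $L$, the problem reduces to producing the desired $x_0$ inside $V$.

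To construct such an $x_0$, I would first invoke the Riesz representation theorem in each slot at the maximizer:
\[
L(x_1,\ldots,x_{i-1},y,x_{i+1},\ldots,x_n)=M\langle y,x_i\rangle \qquad (y\in V,\;1\leq i\leq n).
\]
The base case $n=2$ is then immediate: expanding $L(x_1+x_2,x_1+x_2)$ via these identities gives, for $w:=(x_1+x_2)/\|x_1+x_2\|$,
\[
\Lh(w)=\frac{L(x_1,x_1)+2L(x_1,x_2)+L(x_2,x_2)}{\|x_1+x_2\|^2}=\frac{M\bigl(\langle x_1,x_2\rangle+2+\langle x_2,x_1\rangle\bigr)}{2+2\,\mathrm{Re}\langle x_1,x_2\rangle}=M,
\]
provided $x_1+x_2\neq 0$; the degenerate case $x_2=-x_1$ is handled by setting $x_0=x_1$, since then $\Lh(x_1)=L(x_1,x_1)=-L(x_1,-x_1)=-M$ and $|\Lh(x_1)|=M$.

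For $n\geq 3$ I would use the base case as a \emph{pair-equalization} tool: fixing all but two slots $i,j$ at their maximizing values turns $L$ into a symmetric bilinear form in the remaining two arguments that attains its norm $M$ at $(x_i,x_j)$, and the $n=2$ calculation replaces both slots by the common unit vector $(x_i+x_j)/\|x_i+x_j\|$ without reducing $L$. Iterating pair-equalizations one aims to produce a maximizing tuple of the form $(x_0,\ldots,x_0)$, whence $\Lh(x_0)=M$ as required. The main obstacle is precisely this iteration, since a naive sweep through pairs can perturb previously equalized slots; I would overcome it either by a compactness/fixed-point argument on the closed, nonempty, pair-equalization-invariant set $S:=\{(y_1,\ldots,y_n)\in B_V^n:L(y_1,\ldots,y_n)=M\}$, or by writing down an explicit formula for $x_0$ as a linear combination of $x_1,\ldots,x_n$ --- the latter being the route taken in \cite[Section~2]{PST}.
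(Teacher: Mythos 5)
The parts you have written out are sound: the reduction to the finite-dimensional span $V$, the Riesz identities at the maximizer (the identification of the Riesz representative with $Mx_i$ needs the equality case of Cauchy--Schwarz, a step you gloss over but which is routine), and the $n=2$ computation giving $\Lh(w)=M$. However, you explicitly leave the crux --- making pair-equalization terminate for $n\geq 3$ --- open, and neither of your two proposed escapes is carried out, so the proof as it stands has a genuine gap. There is also a small but real error in the set-up of your compactness route: the set $S$ defined by the constraint $L=M$ is \emph{not} pair-equalization-invariant, because equalizing a pair with $z_j=-z_i$ flips the sign of $L$ (your own degenerate $n=2$ case produces $\Lh(x_1)=-M$). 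You should instead work on $T:=\{(y_1,\dots,y_n)\in S_V^n:\ |L(y_1,\dots,y_n)|=M\}$ and allow sign (resp.\ unimodular) rescalings of the slots.

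Once corrected, the compactness idea does close, and then it is a genuinely different proof from the one the paper refers to. On the compact set $T$, maximize $g(y_1,\dots,y_n):=\|y_1+\dots+y_n\|$. If a maximizer $(z_1,\dots,z_n)$ has $z_i\neq z_j$ for some pair, put $v:=\sum_{k\neq i,j}z_k$ and replace slots $i,j$ by $\epsilon w$, where $w=(z_i+z_j)/\|z_i+z_j\|$ if $z_j\neq -z_i$ (and $w=z_i$ otherwise) and $\epsilon$ is a unimodular scalar chosen so that $\langle\epsilon w,v\rangle\geq 0$; the resulting tuple is still in $T$, and since $\|z_i+z_j\|<2$ a two-line computation gives $\|2\epsilon w+v\|>\|z_i+z_j+v\|$, contradicting maximality. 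Hence all $z_i$ coincide and $|\Lh(z_1)|=M$. Note that the paper offers no proof of its own and simply cites \cite{PST}, emphasizing that the argument there is an \emph{explicit} construction of $x_0$ from $x_1,\dots,x_n$; your route, once completed as above, is non-constructive but arguably shorter.
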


For \textit{real} Hilbert spaces it is an interesting fact, see \cite[Theorem $4$]{Harris1}, that the Bernstein-type inequality \eqref{Bernstein-1} is equivalent to Szeg\"o's inequality for \textit{real} trigonometric polynomials (see \cite{CS}). That is, if $T(t)=\sum_{k=-n}^{n}c_{k}e^{ikt}$, $c_{-k}=\overline{c}_{k}$, is a real trigonometric polynomial of degree $n$ which satisfies $|T(t)|\leq 1$ for all real $t$, then
\begin{equation}\label{Szego-ineq}
n^{2}T(t)^{2}+ T^{\prime}(t)^{2}\leq n^{2}\,,\quad \forall\, t\in\mathbb{R}\,.
\end{equation}
But Szeg\"o's inequality \eqref{Szego-ineq} is a special case of a more general inequality for entire functions of exponential type. Recall that an entire function $f: \mathbb{C}\rightarrow \mathbb{C}$ is of \textit{exponential type} (EFET) if for some $A>0$ the inequality
\[
M_{f}(r):= \max_{|z|=r} |f(z)|< e^{Ar}
\]
holds for sufficiently large values of $r$. The greatest lower bound for those values of $A$ for which the latter asymptotic inequality is fulfilled is called the \textit{type $\sigma=\sigma_{f}$} of the function $f$. It follows from the definition of the type that
\[
\sigma_{f}=\limsup_{r\rightarrow \infty} \frac{\log{M_{f}(r)}}{r}\,.
\]
For example, if $ T(t)=\sum_{k=-n}^{n} c_{k} e^{ikt}$ is a trigonometric polynomial of degree $\leq n$, then $T(z)=\sum_{k=-n}^{n} c_{k}e^{ikz}$ is an EFET of type $\leq n$. A classical theorem due to Bernstein \cite{Bernstein2} states that if $f$ is an EFET of type $\leq \sigma$, then $f$ satisfies the inequality
\[
\sup_{t\in\mathbb{R}} |f^{\prime}(t)|\leq \sigma\sup_{t\in\mathbb{R}}|f(t)|\,.
\]
The following theorem, see \cite{Ac} or inequality $(11.4.5)$ in \cite{Boas}, contains Bernstein's inequality as a special case.

\begin{theorem}
Let $ f:\mathbb{C}\rightarrow \mathbb{C}$ be an entire function of exponential type $\leq\sigma$ and let $\sup_{t\in \mathbb{R}} |f(t)|<\infty$. Then for all $\omega\in\mathbb{R}$
\begin{equation}\label{Bernstein-2}
\sup_{t\in\mathbb{R}} |f^{\prime}(t)\cos\omega+ \sigma f(t)\sin\omega|\leq \sigma\sup_{t\in\mathbb{R}} |f(t)|\,.
\end{equation}
Equality holds in \eqref{Bernstein-2} if and only if $f(z)=ae^{i\sigma z}+ be^{-i\sigma z}$, where $a, b\in\mathbb{C}$.
\end{theorem}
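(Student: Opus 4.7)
After rescaling so that $M:=\sup_{t\in\mathbb R}|f(t)|=1$, introduce the entire function $\Phi(z):=f'(z)\cos\omega+\sigma f(z)\sin\omega$, which is itself of exponential type $\le\sigma$ since differentiation preserves the type. The target inequality becomes $|\Phi(t)|\le\sigma$ for every real $t$, and by translation invariance it suffices to take $t=0$. A preparatory step is the Carleman--Bernstein bound
\[
|f(z)|\le e^{\sigma|\mathrm{Im}\,z|},\qquad z\in\mathbb C,
\]
obtained by applying the Phragm\'en--Lindel\"of principle to $e^{\mp i\sigma z}f(z)$ in the upper and lower half-planes respectively; this growth control is what will make the residue calculus of the next step legitimate.

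The crux of the argument is an M.~Riesz-type interpolation formula for $\Phi(0)$. One considers a meromorphic function of the shape
\[
F(z):=\frac{\Phi(z)}{z\cos(\sigma z-\omega)},
\]
whose residue at $z=0$ recovers (up to an explicit constant) $\Phi(0)$ and whose other simple poles lie at the zeros $t_n(\omega)=((n+\tfrac12)\pi+\omega)/\sigma$ of $\cos(\sigma z-\omega)$. Integrating $F$ over a sequence of large contours of radii $R_k\to\infty$ between consecutive poles, the Carleman--Bernstein bound combined with the exponential lower bound $|\cos(\sigma z-\omega)|\asymp e^{\sigma|\mathrm{Im}\,z|}$ (away from the real zeros of the cosine) forces the boundary integrals to vanish. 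Summing residues yields an absolutely convergent representation
\[
\Phi(0)=\sum_{n\in\mathbb Z}c_n(\omega)\,f(t_n(\omega)),
\]
with weights $c_n(\omega)=O(1/n^{2})$. Exactness of the formula on the basis $e^{\pm i\sigma z}$ is verified by direct substitution, and a short closed-form computation (using $\sum_{n\in\mathbb Z}(n+\tfrac12)^{-2}=\pi^{2}$ together with standard trigonometric identities) gives the key identity $\sum_{n\in\mathbb Z}|c_n(\omega)|=\sigma$. The triangle inequality then delivers
\[
|\Phi(0)|\le\sum_{n\in\mathbb Z}|c_n(\omega)|\cdot|f(t_n(\omega))|\le\sigma\cdot 1=\sigma.
\]

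For the equality case, the triangle inequality is saturated only when $|f(t_n(\omega))|=1$ for every $n$ and all terms $c_n(\omega)f(t_n(\omega))$ share a common argument. Since $\{t_n(\omega)\}$ is a uniqueness set for the Bernstein space of type $\sigma$, chasing these constraints back through the interpolation, and noting that $e^{i\sigma z}$ and $e^{-i\sigma z}$ are the only unimodular entire functions of exponential type $\le\sigma$ bounded on $\mathbb R$, pins $f$ down to the form $f(z)=ae^{i\sigma z}+be^{-i\sigma z}$. \textbf{Main obstacle.} Obtaining the sharp constant $\sigma$---rather than the weaker $2\sigma$ that a naive application of the classical Bernstein inequality to the type-$2\sigma$ function $e^{i\sigma z}f(z)$ would give---forces us to run the Riesz interpolation for $\Phi$ itself. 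The delicate technical points are the precise design of the meromorphic integrand so that its residue at $0$ cleanly recovers $\Phi(0)$, the verification that the integrals at infinity vanish along a well-chosen sequence of radii (one must avoid accumulating near the poles), and the closed-form calculation producing $\sum_n|c_n(\omega)|=\sigma$ independently of $\omega$.
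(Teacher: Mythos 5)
The paper does not prove this theorem; it cites it from Achieser and from Boas (inequality $(11.4.5)$ in the latter), so there is no in-paper argument to compare against. Your plan---a Riesz-type interpolation formula for $\Phi(0)$ with weights of absolute sum $\sigma$---is the right classical machinery behind the $\omega=0$ case, but the construction as written has a real gap. The integrand $F(z)=\Phi(z)/(z\cos(\sigma z-\omega))$ does not yield the claimed representation: the residue of $F$ at the zero $t_n=\bigl((n+\tfrac12)\pi+\omega\bigr)/\sigma$ of $\cos(\sigma z-\omega)$ equals $\Phi(t_n)/\bigl(-\sigma t_n(-1)^n\bigr)$, i.e.\ it involves $\Phi(t_n)$, not $f(t_n)$, so summing residues only relates $\Phi(0)$ to the values $\Phi(t_n)$---circular for your purposes. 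These weights also decay only like $1/|n|$, not $1/n^{2}$, so $\sum_n|c_n(\omega)|$ would diverge; the $O(1/n^{2})$ decay and the identity $\sum_n|c_n|=\sigma$ in the genuine Riesz formula for $f'(0)$ come from a second-order zero structure in the denominator, which your $F$ lacks away from the origin. Worse, an absolutely convergent formula $\Phi(0)=\sum_n c_n(\omega)f\bigl(t_n(\omega)\bigr)$ with these nodes cannot exist at all: exactness on $f(z)=e^{isz}$, $|s|\le\sigma$, forces, upon expanding $e^{-iu\omega}(iu\cos\omega+\sin\omega)$ in the orthonormal system $\{e^{i\pi(n+1/2)u}\}_{n\in\mathbb{Z}}$ on $[-1,1]$, coefficients that contain a term $\sin(2\omega)/\alpha_n$ with $\alpha_n=(n+\tfrac12)\pi+\omega$, and these are not absolutely summable unless $\sin 2\omega=0$.

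What is sound in your sketch: the Phragm\'en--Lindel\"of bound $|f(z)|\le M e^{\sigma|\mathrm{Im}\,z|}$, the care in choosing contour radii between poles, and the remark that reducing to $e^{\pm i\sigma z}f(z)$ loses a factor $2$. But the classical route to the full $\omega$-dependent statement is different: one first shows that for $|\lambda|>M$ the function $f(z)-\lambda e^{i\sigma z}$ has no zeros in the open upper half-plane, and then extracts the Szeg\H{o}-type bound and the equality case from this zero-free property (and its analogue in the lower half-plane). Your treatment of equality is also too loose: that $e^{\pm i\sigma z}$ are the only ``unimodular'' bounded entire functions of exponential type $\le\sigma$ is asserted without proof, and ``chasing constraints'' through a formula which, as written, does not hold cannot identify the extremals.
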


In particular, if $T$ is a \textit{real} trigonometric polynomial of degree $n$ with $|T(t)|\leq 1$ for all real $t$, inequality \eqref{Bernstein-2} implies Szeg\"o's inequality \eqref{Szego-ineq}.

We prove now that the Bernstein-type inequality \eqref{Bernstein-1} on real or complex Hilbert spaces can be easily derived from inequality \eqref{Bernstein-2}(cf. \cite[Theorem 2.2]{AST}).

\begin{theorem}\label{Banach-2}
Let $\left(H, \langle\cdot, \cdot\rangle\right)$ be a real or complex Hilbert space. If $P: H\rightarrow \mathbb{K}$ is a continuous polynomial of degree $n$ and $x$ is a unit vector in $H$, then
\begin{equation}\label{ast-ineq1}
\{n^{2}|P(x)|^{2}-|DP(x)x|^{2}+\|DP(x)\|^{2}\}^{1/2}\leq n\|P\|\,.
\end{equation}
In particular, if $P=\Lh$ is a continuous $n$-homogeneous polynomial, then
\[
\|D\Lh\|\leq n\|\Lh\|\,.
\]
In other words, $\|L\|=\|\Lh\|$ for any $L\in\mathcal{L}^{s}\left(^{n} H\right)$.
\end{theorem}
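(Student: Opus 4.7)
The plan is to establish the pointwise bound \eqref{ast-ineq1} by restricting $P$ to a one-parameter family of unit vectors through $x$ and invoking Bernstein's inequality \eqref{Bernstein-2} for entire functions of exponential type. Once \eqref{ast-ineq1} is in hand, the second conclusion follows from Euler's identity $D\widehat{L}(x)(x)=n\widehat{L}(x)$, which collapses the left-hand side of \eqref{ast-ineq1} to $\|D\widehat{L}(x)\|$; together with the equivalence \eqref{Bernstein-1} stated just before the theorem, this yields $\|L\|=\|\widehat{L}\|$.

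First I would reduce \eqref{ast-ineq1} to showing, for every unit vector $y\in H$ with $\langle y,x\rangle=0$, that
\[
n^{2}|P(x)|^{2}+|DP(x)y|^{2}\leq n^{2}\|P\|^{2}.
\]
This step uses the Hilbert-space identity
\[
\|DP(x)\|^{2}=|DP(x)x|^{2}+\sup\bigl\{|DP(x)y|^{2}:\langle y,x\rangle=0,\ \|y\|=1\bigr\},
\]
which is obtained by decomposing the Riesz representative of $DP(x)\in H^{*}$ along $x$ and $x^{\perp}$.

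For such $x$ and $y$, and a free phase $\alpha\in\mathbb{R}$, I would introduce the entire function
\[
f_{\alpha}(z):=P\bigl((\cos z)\,x+(\sin z)\,e^{i\alpha}y\bigr),\qquad z\in\mathbb{C}.
\]
Expanding $P$ into its homogeneous components and writing $\cos z,\sin z$ via $e^{\pm iz}$ shows that $f_{\alpha}$ is an entire function of exponential type $\leq n$, and for real $t$ the argument $(\cos t)x+(\sin t)e^{i\alpha}y$ has norm $1$ by orthogonality, whence $|f_{\alpha}(t)|\leq\|P\|$. Applying \eqref{Bernstein-2} at $t=0$, using $f_{\alpha}(0)=P(x)$ and $f_{\alpha}'(0)=e^{i\alpha}DP(x)y$, produces
\[
\bigl|e^{i\alpha}DP(x)y\cos\omega+nP(x)\sin\omega\bigr|\leq n\|P\|,\qquad\forall\,\omega\in\mathbb{R}.
\]

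The main obstacle lies in the complex case: for generic complex trigonometric polynomials the Szeg\"o-type bound $n^{2}|T(0)|^{2}+|T'(0)|^{2}\leq n^{2}\|T\|_{\infty}^{2}$ fails (consider $T(t)=e^{int}$), so maximising the displayed inequality over $\omega$ alone does not recover $\sqrt{|DP(x)y|^{2}+n^{2}|P(x)|^{2}}$ when $DP(x)y$ and $P(x)$ carry different complex arguments. The remedy is to exploit the freedom in $\alpha$: choose it so that $e^{i\alpha}DP(x)y$ has the same complex argument as $P(x)$ (the degenerate cases $P(x)=0$ or $DP(x)y=0$ are trivial from $\omega=0$ or $\pi/2$). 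The two summands then become positively aligned complex numbers, and the maximum of the left-hand side over $\omega$ equals $\sqrt{|DP(x)y|^{2}+n^{2}|P(x)|^{2}}$, proving the reduced inequality. In the real case $\alpha=0$ suffices and the argument reduces to Szeg\"o's inequality \eqref{Szego-ineq} applied to $t\mapsto P((\cos t)x+(\sin t)y)$. Finally, substituting $P=\widehat{L}$ in \eqref{ast-ineq1} and applying Euler's identity cancels the $n^{2}|P(x)|^{2}-|DP(x)x|^{2}$ terms, leaving $\|D\widehat{L}(x)\|\leq n\|\widehat{L}\|$ for every unit $x$, which is \eqref{Bernstein-1} and thus equivalent to $\|L\|=\|\widehat{L}\|$.
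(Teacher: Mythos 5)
Your proof is correct and follows essentially the same route as the paper: restrict $P$ to the great circle $t\mapsto(\cos t)\,x+c(\sin t)\,y$ (your $c=e^{i\alpha}$), invoke the Bernstein inequality \eqref{Bernstein-2} for entire functions of exponential type at $t=0$, and optimize over the free phase $c$ and $\omega$ to get $|DP(x)y|^{2}+n^{2}|P(x)|^{2}\leq n^{2}\|P\|^{2}$ for orthogonal unit vectors, then pass to $\|DP(x)\|$ via the decomposition along $x$ and $x^{\perp}$. The only minor cosmetic difference is that you use the exact Riesz identity $\|DP(x)\|^{2}=|DP(x)x|^{2}+\sup_{y\perp x,\,\|y\|=1}|DP(x)y|^{2}$, whereas the paper writes $u=\alpha x+\beta y$ and uses the Cauchy--Schwarz bound $|DP(x)u|^{2}\leq|DP(x)x|^{2}+|DP(x)y|^{2}$; both give the same conclusion.
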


\begin{proof}
Let $x, y$ be orthogonal unit vectors in $H$ and let $c\in\mathbb{K}$ satisfy $|c|=1$. Then $T(t):= P\left(x\cos{t} + cy\sin{t}\right)$ is a trigonometric polynomial of degree $\leq n$. But $\|x\cos{t}+ cy\sin{t}\|=1$ and therefore $|T(t)|\leq \|P\|$, for any $t\in\mathbb{R}$. Since  $T^{\prime}(t)= DP(x\cos{t}+ cy\sin{t})(-x\sin{t}+ cy\cos{t})$, Bernstein's inequality \eqref{Bernstein-2}, for $t=0$, implies
\[
|c DP(x)y\cos\omega+ nP(x)\sin\omega|\leq n\|P\|\,,\quad\forall\, \omega\in\mathbb{R}\,.
\]
By appropriate choice of $c$, $|c|=1$ and $\omega\in\mathbb{R}$ we get
\begin{equation}\label{ast-ineq2}
\{|DP(x)y|^{2}+ n^{2}|P(x)|^{2}\}^{1/2}\leq n\|P\|\,.
\end{equation}
Now, let $x$ be a fixed unit vector in $H$. Then, given a unit vector $u$ in $H$ it is possible to find a unit vector $y\in H$ orthogonal to $x$ so that $u=\alpha x+ \beta y$, where $|\alpha|^{2}+ |\beta|^{2}=1$. Since
\[
|DP(x)u)|^{2}=|\alpha DP(x)x+ \beta DP(x)y|^{2}\leq |DP(x)x|^{2}+ |DP(x)y|^{2}\,,
\]
using \eqref{ast-ineq2} we have
\[
\{|DP(x)u|^{2}- |DP(x)x|^{2}+ n^{2}|P(x)|^{2}\}^{1/2}\leq n\|P\|\,,\quad \forall u\in S_{H}\,.
\]
But $\|DP(x)\|=\sup_{\|u\|=1} |DP(x)u|$ and the proof of \eqref{ast-ineq1} follows.

If $P=\Lh$ is a continuous $n$-homogeneous polynomial, then as a particular case of \eqref{1-differential}  $D\Lh(x)x=n\Lh(x)$ and \eqref{ast-ineq1} is equivalent to $\|D\Lh(x)\|\leq n\|\Lh\|$, for every $x\in S_{H}$.
\end{proof}

In $1990$ Lomonosov \cite{Lomonosov} conjectured that Bernstein's inequality \eqref{Bernstein-1} for continuous $2$-homogeneous polynomials characterizes \textit{real} Hilbert spaces. Ben\'itez and Sarantopoulos \cite{Bensar} proved this conjecture in $1993$. In other words, it was shown that if $X$ is a \textit{real} Banach space, then $\|D\Lh\|\leq 2\|\Lh\|$ (or $ \|L\|=\|\Lh\|$) for any $\Lh\in\mathcal{P}\left({}^{2} X\right)$ if and only if $X$ is a \textit{real} Hilbert space.

However, Bernstein's inequality \eqref{Bernstein-1} for continuous homogeneous polynomials doesn't characterize \textit{complex} Hilbert spaces. As it has been proved in \cite{{Harris1}}, Bernstein's inequality for continuous homogeneous polynomials holds on the \textit{complex} $\ell_{\infty}^{2}$, the $2$-dimensional \textit{complex} $C(K)$ space. This result cannot be extended to all $C(K)$ spaces. For instance, in \cite{Tonge2} an example of a $2$-homogeneous polynomial was given on the \textit{complex} $\ell_{\infty}^{3}$ for which Bernstein's inequality fails. Recall that a $C(K)$ space is the Banach space of continuous functions on the compact Hausdorff space $K$, under the usual uniform norm. It is known that for any $\sigma$-finite measure $\mu$ the space $L_{\infty}(\mu)$ is isometric to a $C(K)$ space, see \cite[Proposition 4.3.8($ii$) and Theorem 4.3.7(Kelley \cite{Kelley})]{AK}. The simplest examples of $C(K)$ spaces are $\ell_{\infty}$ and
$L_{\infty}[0, 1]$.

Now we give another example of a \textit{complex} Banach space for which Bernstein's inequality for continuous homogeneous polynomials does hold. For this we need the following result of Harris.
\begin{proposition}\cite[Corollary $3$]{Harris1}
Let $\left(H, \langle \cdot, \cdot \rangle\right)$ be \emph{complex} Hilbert space and let $P: H\rightarrow \mathbb{C}$ be a continuous polynomial of degree
$n$. Then,
\begin{equation}\label{hcor3}
|nP(x)- DP(x)x|+ \|DP(x)\|\leq n\|P\|\,,\quad \forall x\in B_{H}\,.
\end{equation}
\end{proposition}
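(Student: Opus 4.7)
The plan is to imitate the strategy used to prove the preceding Bernstein-type theorem via entire functions of exponential type: build a curve $\gamma\colon\mathbb{R}\to\bar B_H$ with $\gamma(0)=x$ whose image under $P$ is an EFET of type $\le n$, and then apply a Bernstein-type inequality sharpened to reflect the structure of that EFET. What distinguishes the \emph{complex} Hilbert case from the real case is that the rotation $x\mapsto e^{it}x$ is an isometry of $\bar B_H$, which means $P$ along a curve of the form $e^{it}a+b$ (with fixed $a,b\in H$) restricts to a trigonometric polynomial of the form $g(e^{it})$ with $g$ an honest polynomial of degree $\le n$; this purely ``positive-frequency'' structure is precisely what will produce the additional $|nP(x)-DP(x)x|$ term in \eqref{hcor3}.

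Concretely, for $\|x\|<1$, a unit vector $y\perp x$, a unimodular $c\in\mathbb{C}$, and $\alpha\in\mathbb{C}$ with $|\alpha|\le\mu:=\tfrac{1}{2}\sqrt{1-\|x\|^{2}}$, I would consider
\[
\gamma(t)\;=\;e^{it}(x-c\alpha y)+c\alpha y.
\]
Since $y\perp x$ one computes $\|\gamma(t)\|^{2}=\|x\|^{2}+4|\alpha|^{2}\sin^{2}(t/2)\le 1$, so $f(t):=P(\gamma(t))$ satisfies $|f|\le\|P\|$ on $\mathbb{R}$ and has the form $f(t)=g(e^{it})$ for a one-variable polynomial $g$ of degree $\le n$ with $\|g\|_{\bar D}\le\|P\|$. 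I would then invoke the sharp Lax-type disk inequality
\[
|ng(z)-zg'(z)|+|g'(z)|\;\le\;n\,\|g\|_{\bar D},\qquad|z|=1,
\]
which is equivalent, via the reverse polynomial $g^{*}(z):=z^{n}\overline{g(1/\bar z)}$, to the classical bound $|g'|+|(g^{*})'|\le n\|g\|_{\bar D}$ on the unit circle. Using $zg'(z)|_{z=e^{it}}=-if'(t)$ and evaluating at $t=0$, where $f(0)=P(x)$ and $f'(0)=iDP(x)(x-c\alpha y)$, this becomes
\[
|nP(x)-DP(x)x+c\alpha\,DP(x)y|+|DP(x)x-c\alpha\,DP(x)y|\;\le\;n\|P\|.
\]

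Setting $\alpha=0$ already recovers \eqref{hcor3} in the radial direction $u=x/\|x\|$; to obtain the full inequality I would sweep $c,\alpha,y$ and try to extract $\|DP(x)\|$ from the resulting family of bounds. I expect this last step to be the main obstacle: the curve $\gamma$ only injects $DP(x)y$ into the inequality with a weight $|c\alpha|\le\mu$, while $\mu=\tfrac{1}{2}\sqrt{1-\|x\|^{2}}$ is strictly smaller than $1$ whenever $\|x\|>0$. The cleanest way around this, in my view, is to first prove \eqref{hcor3} at $x=0$ --- where it reduces to the one-dimensional statement $n|p(0)|+|p'(0)|\le n\|p\|_{\bar D}$ for polynomials $p$ of degree $\le n$ on $\bar D$, applied slicewise to $p(z)=P(zu)$ for each unit vector $u\in H$ --- and then to transport the $x=0$ estimate to general $x\in B_H$ via the M\"obius automorphism $\phi_x$ of $B_H$ sending $0\mapsto x$, exploiting the fact that $(1-\langle\cdot,x\rangle)^{n}P\circ\phi_x$ is again a polynomial of degree $\le n$ on $H$.
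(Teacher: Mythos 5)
The statement you are proving is presented in the paper only as a citation (\cite[Corollary 3]{Harris1}); the paper gives no proof of its own, so I evaluate your proposal on its own terms.

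Your preliminary computations are sound: the norm bound $\|\gamma(t)\|^{2}=\|x\|^{2}+4|\alpha|^{2}\sin^{2}(t/2)$ is correct, the reduction to a one--variable polynomial $g$ with $\|g\|_{\bar D}\le\|P\|$ is correct, and the ``Lax--type'' inequality
$|ng(z)-zg'(z)|+|g'(z)|\le n\|g\|_{\bar D}$ on $|z|=1$ is indeed a known fact (it is equivalent to $|g'|+|(g^{*})'|\le n\|g\|_{\bar D}$ on the circle, and it extends into $|z|<1$ because $ng-zg'$ and $g'$ are polynomials, so the sum of their moduli is subharmonic). Your $x=0$ base case $n|P(0)|+\|DP(0)\|\le n\|P\|$, obtained by slicing with $p(z)=P(zu)$ and applying the interior Lax inequality at $z=0$, is also correct. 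You are also right that the curve argument alone produces, for $|w|\le\tfrac12\sqrt{1-\|x\|^{2}}$, only
\[
|nP(x)-DP(x)x+w\,DP(x)y|+|DP(x)x-w\,DP(x)y|\le n\|P\|,
\]
which is weaker than \eqref{hcor3}; and your remark that $\alpha=0$ ``recovers'' the inequality in the radial direction is an overstatement, since it yields $|nP(x)-DP(x)x|+|DP(x)x|\le n\|P\|$, which is weaker than the radial specialization $|nP(x)-DP(x)x|+\|x\|^{-1}|DP(x)x|\le n\|P\|$ one gets from the pure radial slice.

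The genuine gap is in the proposed M\"obius transport, which is only sketched and in fact does not close the argument. If $\phi_{x}$ is the ball automorphism with $\phi_{x}(0)=x$, then $Q:=(1-\langle\,\cdot\,,x\rangle)^{n}\,P\circ\phi_{x}$ is indeed a polynomial of degree $\le n$, but its norm is \emph{not} controlled by $\|P\|$: on $\|z\|=1$ one has $|Q(z)|=|1-\langle z,x\rangle|^{n}\,|P(\phi_{x}(z))|$, and $|1-\langle z,x\rangle|$ ranges up to $1+\|x\|$, so $\|Q\|$ can be as large as $(1+\|x\|)^{n}\|P\|$ (this is exact, e.g.\ for $P\equiv 1$). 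Applying the $x=0$ estimate to $Q$ therefore yields $n|P(x)|+\|DQ(0)\|\le n(1+\|x\|)^{n}\|P\|$, which is far from the sharp constant. There is a second, independent problem: differentiating $Q$ at $0$ gives
\[
DQ(0)v=-n\langle v,x\rangle P(x)-s_{x}^{2}\,\tfrac{\langle v,x\rangle}{\|x\|^{2}}\,DP(x)x-s_{x}\,DP(x)(Q_{x}v),\qquad s_{x}=\sqrt{1-\|x\|^{2}},
\]
so $\|DQ(0)\|$ packages $P(x)$, $DP(x)x$ and the tangential part of $DP(x)$ into a single Euclidean norm, and this does \emph{not} reassemble into the two separate quantities $|nP(x)-DP(x)x|$ and $\|DP(x)\|$ that appear in \eqref{hcor3}. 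Thus the transport step, as stated, neither preserves the normalization nor produces the right algebraic combination, and the proof is not complete. To finish you would need either a genuinely sharper two--variable ingredient (a Lax--type estimate on the Euclidean ball of $\mathbb{C}^{2}$ proved directly), or a different, norm--preserving reduction from $x$ to the boundary followed by a plurisubharmonicity argument; the M\"obius conjugation with the factor $(1-\langle\,\cdot\,,x\rangle)^{n}$ does not do this.
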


Observe that $S(x):= nP(x)- DP(x)x$ is the sum of the first $n-1$ partial sums of the polynomial $P$.

\begin{proposition}
If $H$ is a complex Hilbert space, consider the complex Banach space $H\times\mathbb{C}$, with the supremum norm,  which is a non-Hilbert space. Then,
\[
\|D\Lh\|\leq n\|\Lh\|\,,\qquad \forall\, \Lh\in\mathcal{P}\left({}^{n} H\times\mathbb{C}\right)\,.
\]
In other words, $\|L\|=\|\Lh\|$ for any $L\in\mathcal{L}^{s}\left(^{n} H\times\mathbb{C}\right)$.
\end{proposition}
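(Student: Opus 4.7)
The plan is to reduce the desired pointwise estimate at an arbitrary $(x_{0},\lambda_{0})\in B_{H\times\mathbb{C}}=B_{H}\times\overline{\mathbb{D}}$ to its analogue on the distinguished boundary $\{x\in B_{H},\ |\lambda|=1\}$ via a subharmonicity argument, and to handle that boundary case by a direct application of Harris' inequality \eqref{hcor3} to a one-variable slice.

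For the boundary step, fix $\lambda\in\overline{\mathbb{D}}$ and apply \eqref{hcor3} to $P_{\lambda}(x):=\Lh(x,\lambda)$, which is a polynomial of degree $\le n$ on the complex Hilbert space $H$ with $\|P_{\lambda}\|_{B_{H}}\le\|\Lh\|$. Since $\Lh$ is $n$-homogeneous on $H\times\mathbb{C}$, Euler's identity gives $D_{1}\Lh(x,\lambda)x+D_{2}\Lh(x,\lambda)\lambda=n\Lh(x,\lambda)$, so $nP_{\lambda}(x)-DP_{\lambda}(x)x=\lambda\,D_{2}\Lh(x,\lambda)$, and \eqref{hcor3} becomes
\[
|\lambda|\,|D_{2}\Lh(x,\lambda)|+\|D_{1}\Lh(x,\lambda)\|_{H^{*}}\le n\|\Lh\|.
\]
Specialising to $|\lambda|=1$ and recalling that the dual of the $\ell_{\infty}$-sum norm on $H\times\mathbb{C}$ is the $\ell_{1}$-sum norm $\|(\phi,c)\|_{*}=\|\phi\|_{H^{*}}+|c|$, the left-hand side is precisely $\|D\Lh(x,\lambda)\|_{(H\times\mathbb{C})^{*}}$; hence $\|D\Lh(x,\lambda)\|_{*}\le n\|\Lh\|$ on the whole ``torus'' $x\in B_{H},\ |\lambda|=1$.

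To promote this torus bound to the full ball, fix $x_{0}\in B_{H}$ and view $\lambda\mapsto D\Lh(x_{0},\lambda)$ as a polynomial in $\lambda$ with values in the Banach space $(H\times\mathbb{C})^{*}$, in particular as a holomorphic Banach-space-valued map $\overline{\mathbb{D}}\to(H\times\mathbb{C})^{*}$. A standard Hahn-Banach argument then shows that the scalar function $\lambda\mapsto\|D\Lh(x_{0},\lambda)\|_{*}$ is subharmonic on $\mathbb{D}$ and continuous on $\overline{\mathbb{D}}$. By the maximum principle,
\[
\|D\Lh(x_{0},\lambda_{0})\|_{*}\le\max_{|\lambda|=1}\|D\Lh(x_{0},\lambda)\|_{*}\le n\|\Lh\|
\]
for every $\lambda_{0}\in\overline{\mathbb{D}}$; taking the supremum over $x_{0}\in B_{H}$ yields the proposition, and the equivalent polarization statement $\|L\|=\|\Lh\|$ follows from the general principle recalled in the paragraph preceding the proposition.

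The main obstacle I anticipate is conceptual rather than computational. Applying Harris' inequality instead to the $\mathbb{C}$-slice $Q_{x_{0}}(\lambda)=\Lh(x_{0},\lambda)$ yields, by the same Euler manipulation, the tempting but insufficient bound $|D_{1}\Lh(x_{0},\lambda_{0})x_{0}|+|D_{2}\Lh(x_{0},\lambda_{0})|\le n\|\Lh\|$, which cannot be promoted pointwise to $\|D_{1}\Lh\|_{H^{*}}+|D_{2}\Lh|\le n\|\Lh\|$ when $\|x_{0}\|=1$ and $|\lambda_{0}|<1$, because in general the $x_{0}$-component of the functional $D_{1}\Lh$ is strictly smaller than its $H^{*}$-norm. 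The correct move is to stop fighting pointwise in $(x_{0},\lambda_{0})$ and let complex analysis do the work: use Harris only once, on the $H$-slice, where the Euler factor $|\lambda|$ becomes $1$ on the torus, and then transport that clean torus estimate into the disc by the maximum principle for the subharmonic dual norm.
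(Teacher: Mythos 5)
Your proof is correct, but it follows a genuinely different route from the paper's. The paper de-homogenizes $\Lh$ via $\Lh(\langle x,z\rangle)=z^{n}P(x/z)$ with $P(x)=\Lh(\langle x,1\rangle)$, shows $\|\Lh\|=\|P\|$ by maximum modulus, and---again by maximum modulus, now in the two scalar slots $z$ and $w$ together---reduces the derivative estimate to the slice $z=w=1$, where the identity $D\Lh(\langle x,1\rangle)\langle y,1\rangle=DP(x)y+nP(x)-DP(x)x$ makes \eqref{hcor3} applicable in a single stroke. You instead apply \eqref{hcor3} on each $H$-slice $P_{\lambda}=\Lh(\cdot,\lambda)$, use Euler's identity for the $n$-homogeneous $\Lh$ to identify the Harris remainder $nP_{\lambda}-DP_{\lambda}\cdot$ with $\lambda\,D_{2}\Lh$, observe that for $|\lambda|=1$ the resulting bound is exactly the $\ell_{1}$-sum dual norm $\|D\Lh(x,\lambda)\|_{*}$, and then promote this torus estimate to the full disc by subharmonicity of $\lambda\mapsto\|D\Lh(x_{0},\lambda)\|_{*}$. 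Both arguments ultimately rest on \eqref{hcor3} plus one complex-analytic promotion step, but you absorb the direction variable into the $\ell_{1}$ dual norm once and for all, confining the maximum-modulus work to the position variable, whereas the paper runs a scalar maximum-modulus argument in $z$ and $w$ simultaneously; a modest bonus of your version is that it works verbatim for an arbitrary complex Hilbert space $H$, sidestepping the paper's explicit finite-dimensional reduction and limiting argument.
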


\begin{proof}
Suppose $\dim(H)<\infty$. Any continuous $n$-homogeneous polynomial $\Lh$ on $H\times\mathbb{C}$ can be written in the form
\[
\Lh(<x, z>)=z^{n}P\left(\frac{x}{z}\right)\,,\quad \forall x\in H, z\in\mathbb{C}\,,
\]
where $P$ is a polynomial of degree $n$ on $H$. By the maximum modulus principle
\[
\|\Lh\|=\sup_{\|<x, z>\|=1} \left|z^{n}P\left(\frac{x}{z}\right)\right|=\sup_{\|x\|\leq 1} |P(x)|=\|P\|\,.
\]
To prove
\[
|D\Lh(<x, z>)<y, w>|\leq n\|\Lh\|\,,\quad \forall<x, z>,\, <y, w>\in B_{H\times\mathbb{C}}\,,
\]
by the maximum modulus principle is enough to show that
\[
|D\Lh(<x, 1>)<y, 1>|\leq n\|\Lh\|\,,\quad \forall x,\, y\in B_{H}\,.
\]
For this we need the following identity, which can be easily checked
\[
D\Lh(<x, 1>)<y, 1>=DP(x)y +nP(x)- DP(x)x\,.
\]
Then, from inequality \eqref{hcor3} it follows that
\[
|D\Lh(<x, 1>)<y, 1>|\leq n\|P\|=n\|\Lh\|\,,\quad \forall x,\, y\in B_{H}\,.
\]
Based on the finite dimensional case, a simple argument gives the proof in the case $H$ is an arbitrary complex Hilbert space.
\end{proof}

Observe that in the special case $H=\mathbb{C}$, the space $H\times\mathbb{C}$ with the supremum norm is just the complex space $\ell_{\infty}^{2}$.

\begin{problem}
Characterize the \textit{complex} Banach spaces $X$ for which Bernstein's inequality holds for any continuous homogeneous polynomial on $X$. That is, the \textit{complex} Banach spaces $X$ which share the property
\[
\|D\Lh\|\leq m\|\widehat{L}\|\Leftrightarrow\|L\|=\|\Lh\|\,,\quad \forall\,\Lh\in\mathcal{P}\left({}^{m} X\right)\,.
\]
\end{problem}

\end{document}